\newtheorem{thm}{Theorem}[section]
\newtheorem{prop}[thm]{Proposition}
\newtheorem{lemma}[thm]{Lemma}
\newtheorem*{thm*}{Theorem}
\newtheorem*{lemma*}{Lemma}
\newtheorem*{prop*}{Proposition}
\newtheorem{cor}[thm]{Corollary}
\newtheorem*{cor*}{Corollary}
\newtheorem*{fact*}{Fact}
\newtheorem*{conjecture*}{Conjecture}
\theoremstyle{definition}
\newtheorem{definition}[thm]{Definition}
\newtheorem*{def*}{Definition}
\newtheorem{rmk}[thm]{Remark}
\newtheorem*{rmk*}{Remark}
\newtheorem{bem*}{Bemerkung}
\newtheorem*{ex*}{Example}
\newcommand{\N}{\mathbbm{N}}
\newcommand{\Z}{\mathbbm{Z}}
\newcommand{\Q}{\mathbbm{Q}}
\newcommand{\R}{\mathbbm{R}}
\newcommand{\hyp}{\mathbbm{H}}
\newcommand{\scrF}{\mathscr{F}}
\newcommand{\calE}{\mathcal{E}}
\newcommand{\calT}{\mathcal{T}}
\newcommand{\calU}{\mathcal{U}}
\newcommand{\frakT}{\mathfrak{T}}
\newcommand{\al}{\alpha}
\newcommand{\bb}{\beta}
\newcommand{\ee}{\varepsilon}
\newcommand{\ff}{\varphi}
\newcommand{\ga}{\gamma}
\newcommand{\dist}{\operatorname{dist}}
\newcommand{\im}{\operatorname{im}}
\newcommand{\sub}[1]{_{#1}}
\newcommand{\ind}[2]{_{#1}^{#2}}
\newcommand{\cart}{\hspace*{-1mm}\times\hspace*{-1mm}}
\newcommand{\dirsum}{\hspace*{-1mm}\oplus\hspace*{-1mm}}
\newcommand{\minus}{\hspace*{-0.8mm}\setminus\hspace*{-0.8mm}}
\newcommand{\lra}{\longrightarrow}
\newcommand{\betavol}{\operatorname{vol}_{\beta}} 
\newcommand{\mvolbeta}{\operatorname{MVol}_{\beta}}
\title{Coarse homology of leaves of foliations}
\author{Robert Schmidt}
\begin{document}

\begin{abstract}
We investigate the coarse homology of leaves in foliations of compact manifolds.  This is motivated by the observation that the non-leaves constructed by Schweitzer and by Zeghib all have non-finitely generated coarse homology.  This led us to ask whether the coarse homology of leaves in a compact manifold always has to be finitely generated.  We show that this is not the case by proving that there exist many leaves with non-finitely generated coarse homology.  Moreover, we improve Schweitzer's non-leaf construction and produce non-leaves with trivial coarse homology.
\end{abstract}

\maketitle

\tableofcontents

\section{Introduction}

Given a Riemannian manifold $L$, it is in general very hard to determine whether there exists a foliation of a compact manifold $M$ such that $L$ is quasi-isometric to one of the leaves equipped with the induced metric from $M$.  However, we can rule out certain Riemannian manifolds.  $6$-dimensional examples  of non-leaves were given by Attie and Hurder \cite{Attie-Hurder: Manifolds which cannot be leaves of foliations}, and Zeghib \cite{Zeghib: An expample of a 2-dimensional no leaf} modified their construction to produce $2$-dimensional non-leaves. Schweitzer showed in 1994 \cite{Schweitzer: Surfaces not quasi-isometric to leaves in codimension one foliations} that every open surface carries a metric of bounded geometry that cannot be bi-Lipschitz equivalent to a leaf in a foliation of a compact $3$-manifold, and in 2009 he was able to extend his results from dimension $2$ to any dimension \cite{Schweitzer: Riemannian manifolds not quasi isometric to leaves in codimension one foliations}.  He showed that 
every non-compact manifold carries a metric of bounded geometry such that the resulting Riemannian manifold cannot be diffeomorphically quasi-isometric to a leaf in a codimension one foliation of a compact manifold.  

The above-mentioned non-leaves are constructed through manifolds that violate certain conditions met by leaves in foliations of compact manifolds, and which are preserved under quasi-isometries.  Schweitzer's \textit{bounded homology property} of a Riemannian manifold $(M,g)$ is a condition on certain types of volumes of nullhomologous hypersurfaces in $M$. We give a brief description of the bounded homology property in Section \ref{section: Schweitzers bounded homology property}, for more details we refer the reader to \cite{Schweitzer: Riemannian manifolds not quasi isometric to leaves in codimension one foliations}. Attie-Hurder and Zeghib use the so called \textit{geometric entropy} of a metric space, a measure of the complexity of coverings of the space. The bounded homology property is specifically tailored to foliations and might at first be difficult to grasp. It is thus an interesting question whether the bounded homology property can be expressed through established quasi-isometry invariants.

Schweitzer's and Zeghib's non-leaf constructions deform a metric on a Riemannian manifold (in Schweitzer's case an arbitrary metric on an arbitrary manifold, in Zeghib's case the hyperbolic metric on $\hyp^2$) by inserting balloons of radius tending towards infinity.  Computing the coarse homology of the non-leaves, one notices that it is never finitely generated (Proposition \ref{prop: g_SJ has infinitely generated coarse homology}). This suggests that the bounded homology property and possibly other conditions on leaves might be expressible through the number of generators of the coarse homology. The only known non-leaf with finitely generated coarse homology is the $6$-dimensional example given by Attie and Hurder but no non-leaves with trivial coarse homology were known before. What we want to find out is hence whether the coarse homology of a leaf in a compact manifold must always be finitely generated, and conversely, whether there exist non-leaves with trivial coarse homology.  As we will now describe,
 the first question is to be answered in the negative while we can give a positive answer to the latter.

Due to a connection between coarse homology and ends of manifolds, we can show that there exist leaves in foliations of compact manifolds that have non-finitely generated coarse homology.

\begin{thm}\label{thm: leaves with non-finitely generated HX1}
In every dimension greater than or equal to $2$ there exist Riemannian manifolds $L$ with the degree $1$ coarse homology $HX_1(L)$ containing an Abelian subgroup of infinite rank, such that $L$ can be realized as a leaf in a foliation of a compact manifold of any codimension.
\end{thm}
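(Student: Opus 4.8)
The plan is to realise such an $L$ as a leaf of a suspension foliation whose leaves are regular coverings of a closed manifold with free deck group, and then to compute $HX_1$ via the quasi-isometry between the leaf and a regular tree, exploiting the connection between degree $1$ coarse homology and ends.

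\emph{The construction.} Fix $n\ge 2$ and the desired codimension $q\ge 1$. Choose a closed $n$-manifold $B$ together with a surjection $\phi\colon\pi_1(B)\twoheadrightarrow F$ onto the free group $F=F_2$ of rank $2$: for $n=2$ take $B$ the closed orientable surface of genus $2$, and for $n\ge 3$ take $B=\#^2(S^1\cart S^{n-1})$, for which in fact $\pi_1(B)\cong F_2$. Realise $F_2$ as a subgroup of $PSL_2(\R)\subset\Homeo^+(S^1)$ and let it act on the closed $q$-manifold $\Sigma:=S^1\cart S^{q-1}$ (read $\Sigma=S^1$ if $q=1$) through the first factor; since only countably many points of $S^1$ are fixed by some nontrivial element of $F_2$, there is $x\in\Sigma$ with trivial $F_2$-stabiliser. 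Let $E:=(\widetilde B\cart\Sigma)/\pi_1(B)$ be the suspension of $\phi$ followed by this action. Then $E$ is a closed $(n+q)$-manifold carrying the suspension foliation $\calF$ of codimension $q$, and the leaf through the class of $(\widetilde p,x)$ is the regular covering $L:=\widetilde B/\ker\phi$ of $B$, with deck group $F_2=\pi_1(B)/\ker\phi$ acting cocompactly. Note the same $L$ works for every $q$, so $L$ is realised as a leaf in a foliation of a compact manifold of any codimension.

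\emph{Geometry of the leaf.} Equip $E$ with a bundle-like metric for the fibration $E\to B$, i.e.\ one making $L\to B$ a Riemannian covering onto $(B,g_B)$ for some metric $g_B$; such metrics exist by the standard construction of connection metrics on flat bundles (put a product metric over a fundamental domain of $\pi_1(B)$ in $\widetilde B$ and spread it out using the flat connection and a partition of unity). Then $F_2$ acts on $L$ with its induced metric by deck isometries, cocompactly, so by the Milnor--\v Svarc lemma $L$ is quasi-isometric to $F_2$ with a word metric, hence to the $4$-regular tree $T$; in particular $L$ has bounded geometry.

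\emph{Computing $HX_1$.} It remains to see $HX_1(L)$ contains an Abelian subgroup of infinite rank. Coarse homology is a quasi-isometry invariant of spaces of bounded geometry, so $HX_1(L)\cong HX_1(T)$, and since $T$ is uniformly contractible, $HX_*(T)\cong H_*^{lf}(T)$. As $C_2^{lf}(T)=0$ we have $H_1^{lf}(T)=Z_1^{lf}(T)=\ker\bigl(\partial\colon C_1^{lf}(T)\to C_0^{lf}(T)\bigr)$. Pick a basepoint $v_0$ and infinitely many ends $\omega_0,\omega_1,\omega_2,\dots$; let $r_{\omega_i}$ be the ray from $v_0$ to $\omega_i$, oriented towards $v_0$. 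Each difference $r_{\omega_i}-r_{\omega_0}$ is a bi-infinite line chain, hence a locally finite $1$-cycle, and the family $\{\,r_{\omega_i}-r_{\omega_0}\,\}_{i\ge 1}$ is $\Z$-linearly independent: an edge near $\omega_j$ lying on no other ray carries coefficient $c_j$ in $\sum c_i(r_{\omega_i}-r_{\omega_0})$, so the sum vanishes only if all $c_j=0$. Thus these differences span a free Abelian subgroup of infinite rank in $H_1^{lf}(T)\cong HX_1(T)\cong HX_1(L)$. Varying $B$, $\phi$, $\Sigma$ and the representation produces the announced family of examples.

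\emph{Main obstacle.} The delicate step is the last one: pinning down, for the chosen definition of $HX_*$, the identification of $HX_1$ of a bounded-geometry tree with a locally finite/end homology group of infinite rank. Either one invokes the comparison $HX_*\cong H_*^{lf}$ for uniformly contractible spaces, or one argues directly on an anti-\v Cech system that differences of rays to distinct ends are non-bounding coarse $1$-cycles; exhibiting an honest infinite-rank \emph{subgroup}, rather than merely non-finite generation, and tracking the induced structure through the quasi-isometry $L\simeq T$, is the part requiring care. The rest is a routine verification that the suspension leaf is the stated covering and that the bundle-like metric makes Milnor--\v Svarc applicable.
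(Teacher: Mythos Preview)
Your proof is correct and complete, but it takes a genuinely different route from the paper's.

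\textbf{Construction of the leaf.} The paper does not build its examples by suspension. Instead it invokes Cantwell--Conlon (any compact totally disconnected metrizable space is the endset of a surface leaf in a codimension one foliation of a closed $3$-manifold), obtaining a $2$-dimensional leaf with infinitely many ends, and then takes products $M\times N^{d}\times N^{c}$ with closed manifolds to reach arbitrary leaf dimension and codimension. Your suspension of a surjection $\pi_1(B)\twoheadrightarrow F_2$ acting on $S^1\times S^{q-1}$ is equally valid and arguably more explicit; it has the pleasant feature that the \emph{same} $L=\widetilde B/\ker\phi$ appears as a leaf for every $q$, and that Milnor--\v{S}varc immediately identifies the quasi-isometry type of $L$ with the $4$-regular tree. (Your description of the metric on $E$ is a little loose, but the point you need---that the leaf metric is bi-Lipschitz to the pullback $p^{*}g_B$ along the covering $p\colon L\to B$---follows from compactness of $E$ and transversality of the leaves to the fibres, so nothing is lost.)

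\textbf{Computation of $HX_1$.} Here the strategies diverge more noticeably. The paper proves a general statement (its Proposition on ends): for \emph{any} proper connected geodesic space $X$ with $k$ ends, $HX_1(X;\Z)$ contains $\bigoplus^{k-1}\Z$. This is established by running a locally finite Mayer--Vietoris argument on each Rips complex $R_i(X)$ to see that the cycles $-r_j+r_{j+1}$ are independent in $H_1^{lf}(R_i(X))$, and then checking that these classes survive in the direct limit. You bypass the limit entirely: having replaced $L$ by the tree $T$, you invoke the Higson--Roe comparison $HX_*(T)\cong H_*^{lf}(T)$ for uniformly contractible bounded geometry complexes, after which $C_2^{lf}(T)=0$ reduces everything to a linear independence check among $1$-chains. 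Your argument is shorter and avoids the coarsening bookkeeping; the paper's argument buys a reusable proposition applying to spaces that are not quasi-isometric to any uniformly contractible model. Both exploit the same underlying mechanism---bi-infinite lines joining distinct ends are nontrivial coarse $1$-cycles---so the ``main obstacle'' you flag is exactly the one the paper also isolates, just resolved by a different device.
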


All known non-leaf constructions on an arbitrary manifold produced metrics with non-finitely generated coarse homology. Building on the work of Schweitzer we are able to give a non-leaf construction starting with any non-compact Riemannian manifold that does not affect the coarse homology. 

\begin{thm}\label{thm: non-leaves with unchanged coarse homology}
 On every non-compact Riemannian manifold of bounded geometry $(M ,g)$, $\dim M\geq 3$, there exists a deformation of $g$ to a bounded geometry metric $g'$ such that $(M,g')$ cannot be diffeomorphically quasi-isometric to a leaf of a codimension one $C^{2,0}$-foliation of a compact manifold.  This deformation can be performed in such a way that the coarse homology and the growth type of $(M,g)$ remain unchanged.
\end{thm}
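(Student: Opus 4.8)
The plan is to run Schweitzer's balloon construction, but to insert the balloons along a single properly embedded ray (or a "thin" end) rather than spread throughout $M$, and to attach them by tubes whose cross-sections carry no homology that can survive to infinity. Concretely, I would first fix a properly embedded proper ray $\ga\colon [0,\infty)\to M$ going out to infinity, pick a sequence of radii $r_k\to\infty$ and a sequence of points $x_k=\ga(t_k)$ with $t_k\to\infty$ rapidly, and at each $x_k$ perform a connected sum (along a thin tube of length $O(r_k)$ and fixed small radius) with a "balloon" $B_k$ of diameter $\sim r_k$ on which the relevant volumes of nullhomologous hypersurfaces grow too fast, exactly as in \cite{Schweitzer: Riemannian manifolds not quasi isometric to leaves in codimension one foliations}. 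Because $\dim M\ge 3$, the tube cross-section is a sphere $S^{\dim M-2}$ of dimension $\ge 1$, but we take the balloon $B_k$ to be, say, a suitably metrized disk/ball so that $H_*(B_k)$ is that of a point; the complement of the tube in $M$ is unchanged. The first step of the write-up is to recall precisely (from Section \ref{section: Schweitzers bounded homology property}) which volume growth of which nullhomologous hypersurfaces obstructs the bounded homology property, and to verify that a localized version — balloons only along $\ga$ — still produces a violation, since the bounded homology property is a statement about \emph{all} nullhomologous hypersurfaces and one bad sequence suffices.

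Second, I would check that $g'$ has bounded geometry: this is automatic if the balloons and tubes are chosen from a fixed finite list of model shapes, rescaled, so that curvature and injectivity radius bounds are uniform — this is the same verification as in Schweitzer's paper and I would cite it, adding only the (routine) observation that gluing in tubes of fixed cross-sectional radius preserves the bounds.

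Third, and this is where the real content of \emph{this} theorem lies, I would show the coarse homology and growth type are unchanged. For growth type: the tubes have fixed cross-section and the balloons, chosen to have at most Euclidean (polynomial, or at worst the original) volume growth with $\vol(B_k)$ summable against the ambient growth if the $t_k$ are spaced far enough apart, contribute a negligible amount to $\vol(B_R(x_0))$, so the growth function of $(M,g')$ is coarsely equivalent to that of $(M,g)$ — a direct comparison of volume-counting functions. For coarse homology, I would use a Mayer–Vietoris / excision argument at the level of coarse (uniformly finite or Roe-type) homology: write $(M,g')$ as the union of (a coarsely dense copy of) $(M,g)$ and the disjoint balloons-with-tubes $\bigsqcup_k T_k\cup B_k$, whose intersection is the disjoint union of the cross-spheres. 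Each $T_k\cup B_k$ is coarsely contractible (it coarsely retracts onto $B_k$, which is bounded, hence coarsely trivial), and the disjoint cross-spheres, being a uniformly discrete family of uniformly bounded sets, also have trivial coarse homology in positive degrees and contribute only the "counting" $H_0$ which is absorbed. Hence the coarse Mayer–Vietoris sequence forces $HX_*(M,g')\cong HX_*(M,g)$. The key point making this work is that each added piece is \emph{individually bounded} and the pieces are \emph{coarsely separated} from one another, so although infinitely many balloons are inserted, none of them — and no combination of them — creates a new coarse cycle.

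The main obstacle I anticipate is reconciling two competing demands: the balloons must be geometrically large enough (diameter $r_k\to\infty$) for the bounded homology property to fail, yet they must remain coarsely invisible. A large balloon that is attached by only a \emph{thin} tube is coarsely a bounded set dangling off $M$ at a single coarse point, so it is coarsely trivial — but one must be careful that the tube is genuinely thin and long enough that the balloon does not "fill in" or shortcut any part of $M$, i.e. that the inclusion $(M,g)\hookrightarrow(M,g')$ is a quasi-isometry onto its image and coarsely dense. Making this precise requires choosing the spacing $t_{k+1}-t_k$ growing fast relative to $r_k$ and the tube length, and then checking that geodesics in $(M,g')$ between points of $M$ do not enter deep into balloons — a standard but slightly delicate estimate. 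Once that separation is in hand, the Mayer–Vietoris computation and the bounded-geometry check are routine, and the failure of the bounded homology property (hence the non-leaf conclusion, via the quasi-isometry invariance of that property and Schweitzer's theorem) follows as in \cite{Schweitzer: Riemannian manifolds not quasi isometric to leaves in codimension one foliations}.
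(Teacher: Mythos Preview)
Your plan has a genuine gap, and it is precisely the tension you flag at the end: balloons of diameter $r_k\to\infty$ are \emph{not} coarsely invisible, no matter how thin the attaching tubes are or how far apart you space them. In fact, Proposition~\ref{prop: g_SJ has infinitely generated coarse homology} of this paper computes exactly what happens when you attach Schweitzer's balloons along a ray: the resulting space contains a copy of the balloon space $B'$, and $HX_n(B')\cong\big(\prod_i\Z\big)/\big(\bigoplus_i\Z\big)$. Your Mayer--Vietoris argument is correct up to the point where you assert that $\bigsqcup_k(T_k\cup B_k)$ has trivial coarse homology ``because each $B_k$ is bounded''. Each $B_k$ is individually bounded, but the diameters are unbounded in $k$; at any fixed coarsening scale $i$ only the first few balloons collapse, and the direct limit retains a class for every cofinite tail. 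That $H_*(B_k)$ is trivial (ordinary homology of a disk) is irrelevant --- coarse homology sees the round metric on $S^n(r_k)\setminus B_{d/2}$, which for $r_k\gg i$ looks like a genuine $n$-sphere in $R_i$. So your proposal, as written, simply reproduces Schweitzer's metric $g_S$ and falls to Proposition~\ref{prop: g_SJ has infinitely generated coarse homology}.

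The paper's resolution is to change the inserted pieces, not the way they are attached. Instead of round balloons it glues in thickened perfect binary trees $\mathfrak{T}_k$ of height $k$. These are still topological balls with small boundary, and a short inductive argument (any continuous function on $\mathfrak{T}'_k$ must take some value on at least $\lceil k/2\rceil$ building blocks) shows their Morse-$\beta$-volume is unbounded, so the bounded homology property fails. The crucial difference on the coarse side is that trees are \emph{uniformly contractible}: every metric ball in the attached tree space $\calT'=[0,\infty)\cup_k\calT_k$ is contractible in itself, so Proposition~\ref{prop: coarsening map is isomorphism} gives $HX_*(\calT')\cong H^{lf}_*(\calT')$, and collapsing each finite tree to its root is a proper homotopy equivalence to $[0,\infty)$, whence $HX_*(\calT')=0$. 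Your Mayer--Vietoris then goes through verbatim with $\calT'$ in place of the balloon space. The moral: to be coarsely invisible, the inserted pieces must be uniformly contractible as a family, not merely individually contractible; spheres fail this, trees satisfy it.
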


Applying the theorem to any Riemannian manifold with trivial coarse homology, e.g. a one-ended cylinder $\partial D^n\times \R\sub{\geq 0}\cup D^n\times \{0\}$ we find the following corollary.

\begin{cor}\label{cor: non-leaves with trivial coarse homology}
In every dimension $n\geq 3$, there exist Riemannian manifolds with trivial coarse homology which are not diffeomorphically quasi-isometric to a leaf of a codimension one $C^{2,0}$-foliation of a compact manifold.
\end{cor}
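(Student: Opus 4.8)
The plan is to deduce the corollary directly from Theorem~\ref{thm: non-leaves with unchanged coarse homology} by exhibiting, in each dimension $n\geq 3$, a non-compact Riemannian manifold of bounded geometry whose coarse homology is trivial. The manifold proposed in the statement is the one-ended cylinder $C^n := \partial D^n\times\R\sub{\geq 0}\cup D^n\times\{0\}$, i.e. a half-infinite tube capped off at one end, equipped with a product-type metric that is standard (say, the flat metric on $S^{n-1}\times\R\sub{\geq 0}$) away from the cap and smoothly interpolated near the cap. First I would verify that this metric can be chosen to have bounded geometry: it is a fixed smooth metric outside a compact set and the cap region is compact, so all curvature bounds and the injectivity radius bound are automatic. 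Since $n\geq 3$, the hypothesis $\dim M\geq 3$ of Theorem~\ref{thm: non-leaves with unchanged coarse homology} is met.

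Next I would compute the coarse homology $HX_*(C^n)$. The key point is that $C^n$ is quasi-isometric to the ray $[0,\infty)$ as a coarse space? No --- more precisely, it is coarsely equivalent to $\R\sub{\geq 0}$ only in bounded parts; the correct statement is that $C^n$ is uniformly contractible and has one end, and for such spaces the coarse homology agrees with the locally finite homology, which for $C^n$ is that of a half-open cylinder and hence trivial in all positive degrees (and $HX_0$ is the reduced statement one expects for a one-ended space). Concretely, I would either cite the identification of $HX_*$ with $H^{lf}_*$ for uniformly contractible manifolds of bounded geometry, or argue directly: the cap $D^n\times\{0\}$ provides a uniformly bounded contraction pushing everything toward the capped end, so any coarse cycle is a coarse boundary. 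Either route shows $HX_*(C^n)$ is trivial.

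With a bounded-geometry metric of trivial coarse homology in hand, I would apply Theorem~\ref{thm: non-leaves with unchanged coarse homology} to $(C^n,g)$ to obtain a deformed metric $g'$ of bounded geometry such that $(C^n,g')$ is not diffeomorphically quasi-isometric to a leaf of any codimension one $C^{2,0}$-foliation of a compact manifold, and such that the coarse homology of $(C^n,g')$ equals that of $(C^n,g)$, namely trivial. This is exactly the assertion of the corollary.

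I do not expect a serious obstacle here, as the corollary is essentially an instantiation; the only point requiring genuine care is the computation that the chosen cylinder metric both has bounded geometry and trivial coarse homology, and in particular making sure the coarse homology is trivial in \emph{every} degree (so that the deformation, which preserves it, indeed yields trivial coarse homology as claimed), rather than merely in degree~$1$. I would therefore spell out the coarse-homology computation for $C^n$ carefully --- that is the main content of the proof --- and leave the rest as a direct invocation of the theorem.
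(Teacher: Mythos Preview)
Your overall approach matches the paper's exactly: the paper simply says ``Applying the theorem to any Riemannian manifold with trivial coarse homology, e.g.\ a one-ended cylinder $\partial D^n\times \R_{\geq 0}\cup D^n\times \{0\}$'' and states the corollary without further argument.

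However, the coarse homology computation you sketch contains a slip. You dismiss the claim that $C^n$ is quasi-isometric to $[0,\infty)$, but this is in fact true and is the cleanest route: the projection $S^{n-1}\times[0,\infty)\to[0,\infty)$ is a quasi-isometry because the fibre $S^{n-1}$ has bounded diameter, and the cap is compact. Hence $HX_*(C^n)\simeq HX_*([0,\infty))$, which vanishes by Proposition~\ref{prop: coarsening map is isomorphism} and Remark~\ref{rmk: locally finite homology of [0,infty)}. Your proposed alternative, that $C^n$ is uniformly contractible, is false for $n\geq 2$: for large $t$ a metric ball $B_r(s,t)\subset C^n$ contains all of $S^{n-1}\times\{t\}$ and is itself contained in a piece of cylinder $S^{n-1}\times I$, which retracts onto $S^{n-1}$; to contract this $S^{n-1}$ inside $C^n$ one must enlarge the ball far enough to reach the cap, requiring radius at least $t$, so no uniform $R$ exists. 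Reverting to your first instinct fixes the argument immediately.
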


Theorem \ref{thm: non-leaves with unchanged coarse homology} is optimal in the sense that we cannot expect to find a non-leaf metric of bounded geometry with trivial coarse homology on every open manifold since the degree $1$ coarse homology might be non-trivial for topological reasons as is shown in Proposition \ref{prop: ends create degree 1 coarse homology}.
\\

\textbf{Convention:}  Throughout this article, all foliated manifolds are compact and the foliations are of codimension one.  In particular, the statement \textit{``$(L,g)$ is not quasi-isometric to a leaf''} means \textit{''There doesn't exist a compact manifold $M$ and a codimension $1$ foliation $\scrF$ of $M$ such that $(L,g)$ is quasi-isometric to a leaf of $\scrF$}''.  Moreover, we take all Riemannian metrics to be of bounded geometry and all manifolds to be connected and without boundary.
\\

\textbf{Acknowledgements:} This article presents results from my dissertation \cite{Schmidt: Coarse topology of leaves of foliations}, which was supervised by D. Kotschick. I would like to thank him for his advice and continuous support.

\section{Schweitzer's bounded homology property}\label{section: Schweitzers bounded homology property}

This section gives a brief overview of the results of Schweitzer's article \cite{Schweitzer: Riemannian manifolds not quasi isometric to leaves in codimension one foliations} which we will use in Section \ref{section: non-leaves with trivial coarse homology} to give a construction of non-leaves with trivial coarse homology. The following definitions and results are all from that article.  We will also briefly recall Schweitzer's construction of the non-leaf metric $g_S$ at the end of this section.

\begin{thm}[Theorem 2.8, \cite{Schweitzer: Riemannian manifolds not quasi isometric to leaves in codimension one foliations}]\label{thm:  Schweitzer exists nonleaf metric}
 Let $(L,g)$ be an open Riemannian manifold of bounded geometry. Then there exists a bounded geometry metric $g_S$ on $L$ such that $(L,g_S)$ cannot be diffeomorphically quasi-isometric to a leaf in a codimension $C^{2,0}$-foliation of codimension one of a compact manifold.  Moreover, $g_S$ can be chosen such that $g$ and $g_S$ have the same growth type.
\end{thm}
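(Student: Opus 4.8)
The plan is to derive the statement from two facts about the \emph{bounded homology property} (BHP) recalled in this section: every leaf of a codimension-one $C^{2,0}$-foliation of a compact manifold satisfies BHP, and BHP is preserved under diffeomorphic quasi-isometries. Granting these, it suffices to deform $g$ into a bounded-geometry metric $g_S$ on $L$, of the same growth type, that \emph{fails} BHP. Since BHP controls, by a single admissible function, the volume of a compact region in terms of the BHP-volume $\mvolbeta$ of a hypersurface bounding it, it is enough to produce inside $(L,g_S)$ a sequence of nullhomologous embedded hypersurfaces $\Sigma_i$ of uniformly bounded $\mvolbeta$ that bound regions whose $g_S$-volume tends to $\infty$.

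The device is the \emph{balloon construction}. Since $L$ is open, fix points $p_i\in L$ leaving every compact set, with $g$-distances between them as large as we like, and small geodesic balls $B_i=B_g(p_i,\rho)$; by bounded geometry these are pairwise disjoint and uniformly bi-Lipschitz to a Euclidean $\rho$-ball. On each $B_i$ replace $g$ by a metric making $B_i$ a \emph{balloon of size $\ell_i$}: a long thin finger --- the product of an $(n-1)$-sphere of small fixed radius with $[0,\ell_i]$, capped off at the far end --- glued smoothly to $(L\minus B_i,g)$ along a collar of $\partial B_i$ on which $g_S=g$, with $\ell_i\to\infty$. Let $\Sigma_i$ be the waist (say $\partial B_i$, pushed slightly into the finger). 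Then $\Sigma_i$ is embedded, separates $L$ into the balloon $W_i$ --- diffeomorphic to a ball, with $\vol_{g_S}(W_i)\asymp\ell_i$ --- and its non-compact complement, and is nullhomologous with $\mvolbeta(\Sigma_i)$ bounded independently of $i$. Hence no admissible function can bound $\vol_{g_S}(W_i)$, nor the infinite volume of the only other region $\Sigma_i$ bounds, in terms of $\mvolbeta(\Sigma_i)$; so $(L,g_S)$ fails BHP and is therefore not diffeomorphically quasi-isometric to a leaf.

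It remains to see that $g_S$ has bounded geometry and the same growth type as $g$. Bounded geometry is immediate here, as the only feature varying with $i$ is the \emph{length} of an essentially flat tube, so the curvature and injectivity-radius bounds are uniform in $i$; and $g_S=g$ off the $B_i$. For growth, place $p_i$ at $g$-distance $d_i$ from a basepoint $x_0$ with $d_i\uparrow\infty$ fast and let $\ell_i$ grow slowly (e.g. $d_i=2^i$, $\ell_i=i$); then the volume added inside $B_g(x_0,r)$ is $\sum_{d_i\le r}\ell_i=o(r)$, while $\vol_g(B_g(x_0,r))\gtrsim r$ as for every open complete manifold of bounded geometry, so $\vol_{g_S}(B(x_0,r))\asymp\vol_g(B(x_0,r))$; here one also arranges the modification to only lengthen paths meeting $B_i$ outside a small core, so that $d_{g_S}\ge d_g$ between points outside the $B_i$ and the $B_i$ themselves are not shrunk.

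The routine parts above really are routine in this setting; the substance is the last assertion of the second paragraph, namely that the balloons violate BHP. This is where one must unwind the precise definition of the bounded homology property --- with its tangential and transverse balls $\btang$, $\btrans$ and the functional $\mvolbeta$ --- check that the waists $\Sigma_i$ are hypersurfaces to which BHP applies, and verify that $\mvolbeta(\Sigma_i)$ stays bounded while the enclosed volume is forced to grow without bound, so that no admissible bounding function exists. That step, together with the assumed facts that leaves satisfy BHP and that BHP is a diffeomorphic-quasi-isometry invariant --- the latter using the $C^{2,0}$-regularity of the foliation --- carries the whole argument.
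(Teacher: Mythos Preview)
Your overall strategy is the same as the paper's sketch of Schweitzer's argument: invoke that leaves satisfy the bounded homology property (BHP) and that BHP is a diffeomorphic quasi-isometry invariant, then deform $g$ by inserting balloons so that BHP fails. The gap is that you have misread what BHP actually controls, and as a direct consequence your particular balloons do \emph{not} violate it.

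BHP does not bound the Riemannian volume $\vol_{g_S}(C)$ of a region in terms of anything attached to $\partial C$. In the formulation given here (Definition~\ref{def: bounded homology property}), it bounds the \emph{Morse-$\beta$-volume} $\mvolbeta(C)$ of the codimension-$0$ region --- the minimum over Morse functions $f\geq 0$ with $f|_{\partial C}=0$ of $\max_t\betavol(f^{-1}(t))$ --- in terms of $\betavol(\partial C)$. (You write ``$\mvolbeta(\Sigma_i)$'' for the hypersurface; the relevant boundary quantity is $\betavol(\Sigma_i)$, and $\mvolbeta$ is attached to the bounded region, not to its boundary.) Now take your long thin finger $W_i$, a capped cylinder $S^{n-1}(\rho_0)\times[0,\ell_i]$ with $\rho_0$ fixed and $\ell_i\to\infty$. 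The height along the tube is a Morse function whose level sets are all copies of the small sphere $S^{n-1}(\rho_0)$, so $\mvolbeta(W_i)$ is bounded independently of $i$. Thus your $(L,g_S)$ \emph{satisfies} BHP; the fact that $\vol_{g_S}(W_i)\asymp\ell_i\to\infty$ is irrelevant.

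Schweitzer's balloons, as recalled in Section~\ref{section: Schweitzers bounded homology property}, are instead round spheres $S^n(r_i)$ of radius $r_i\to\infty$ with a small disk removed at the south pole. Here $\betavol(\partial C_i)$ is still uniformly bounded, but $\mvolbeta(C_i)\to\infty$: there is no way to sweep out a large round $n$-sphere by level hypersurfaces all of uniformly bounded $\beta$-volume. The point is that the balloons must become \emph{fat}, not merely long; your ``length only'' deformation is precisely the kind that BHP is designed to be insensitive to. (Incidentally, once the balloons are fat, the growth-type bookkeeping also changes: a balloon of radius $r_i$ adds volume of order $r_i^{\,n}$, so your $o(r)$ estimate for the added volume no longer applies and the spacing $d_i$ must be chosen accordingly.)
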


Theorem \ref{thm:  Schweitzer exists nonleaf metric} follows from the fact that every manifold which is diffeomorphically quasi-isometric to a leaf has to satisfy the bounded homology property. The definition of the bounded homology property requires a little preparation:

\begin{definition}[$\beta$-volume]
 Let $S$ be a subset of a metric space and $\beta>0$.  The \textit{$\beta$-volume} $\betavol(S)$ of $S$ is defined as the minimal number of balls of radius $\bb$ needed to cover $S$.  We have $\betavol(S)\in \N\cup\{\infty\}$.
\end{definition}

\begin{definition}[Morse-$\beta$-volume]
Let $(C,g)$ be a compact Riemannian manifold with boundary and $f: C\rightarrow [0,\infty)$ a Morse function satisfying $f|_{\partial C}\equiv 0$.  For $\beta>0$, the \textit{Morse-$\beta$-volume of $C$ with respect to $f$} is defined to be the smallest natural number $\mvolbeta(C,f)$ such that the $\bb$-volume of every level set of $f$ is bounded by $\mvolbeta(C,f)$, that is $\betavol(f^{-1}(t))\leq \mvolbeta(C,f)$ for all $t\geq 0$.

The \textit{Morse-$\beta$-volume} of $C$ is then defined to be the minimum of $\mvolbeta(C,f)$ taken over all Morse functions vanishing on $\partial C$.  In formulae, the Morse-$\bb$-volume is defined as
\[
\mvolbeta(C)=\min\sub{\genfrac{}{}{0pt}{}{f|\sub{\partial C\equiv 0}}{f\geq 0 \text{ Morse}}} \max\sub{t\geq 0}\betavol\left(f^{-1}(t)\right).
\]
\end{definition}

\begin{definition}[bounded homology property]\label{def: bounded homology property}
 A Riemannian manifold $M$ has the \textit{bounded homology property} if for all $k>0$ and all sufficiently large $\beta>0$, there exists a constant $K(\beta, k)$ such that the Morse-$\bb$-volume $\mvolbeta(C)$ of all compact codimension $0$ submanifolds $C$ with smooth boundary is bounded by $K(\beta, k)$, provided they satisfy the following conditions:
\begin{enumerate}[i)]
 \item $\betavol(\partial C)\leq k$,
 \item $C$ and $\partial C$ are connected and simply connected,
 \item $\partial C$ has a tubular neighbourhood $V$ that contains 
\[
 B_{\bb}(\partial C)=\{x\in M\mid \dist(x, \partial C)<\bb\}.
\]
\end{enumerate}
\end{definition}

\begin{thm}[Theorem 2.6, \cite{Schweitzer: Riemannian manifolds not quasi isometric to leaves in codimension one foliations}]\label{thm: Schweitzer leaves satisfy bounded homology}
 Every $n$-manifold, $n\geq 3$, that is diffeomorphically quasi-isometric to a leaf of a codimension one $C^{2,0}$-foliation of a compact manifold satisfies the bounded homology property.
\end{thm}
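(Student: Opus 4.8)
The plan is first to reduce to the case that $L$ is literally a leaf of a codimension one $C^{2,0}$-foliation $\scrF$ of a compact manifold $M$, carrying the induced metric. For this one checks that the bounded homology property is invariant under diffeomorphic quasi-isometry between bounded geometry manifolds: a $(\la,c)$-quasi-isometry carries $\beta$-balls into $(\la\beta+c)$-balls and back, so it distorts $\betavol$ and $\mvolbeta$ by only a bounded factor, it preserves connectedness and simple connectedness, and it turns a tubular neighbourhood containing $B_\beta(\partial C)$ into one containing $B_{\beta'}(\partial C')$ for a controlled $\beta'$. The second, and more useful, reduction is the observation that $\mvolbeta(C)\le\betavol(C)$ for every compact $C$ with smooth boundary: for any admissible Morse function $f$ one has $f^{-1}(t)\subseteq C$, hence $\betavol(f^{-1}(t))\le\betavol(C)$, and taking the maximum over $t$ and then the minimum over $f$ gives the inequality. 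So it suffices to bound $\betavol(C)$ itself, which is stronger than what is asked: the goal becomes to find, for each large $\beta$ and each $k$, a constant $K(\beta,k)$ with $\betavol(C)\le K(\beta,k)$ for all $C$ satisfying i)--iii). Throughout one uses that, since $M$ is compact and $\scrF$ is $C^{2,0}$, the leaves of $\scrF$ have uniformly bounded geometry and there is a uniform foliated-box size $\ee_0>0$.

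Given such a $C$, the next step is to exploit conditions ii) and iii) to build a transverse product across $\partial C$. Since $\partial C$ is simply connected its holonomy is trivial, so $\partial C$ lies in a foliated product neighbourhood, and by iii) this neighbourhood can be taken to be the codimension two tubular neighbourhood $V\cong\partial C\cart D^2$ that contains $B_\beta(\partial C)$. Splitting the $D^2$ factor into its leafwise-normal and its transverse direction yields both an embedded leafwise collar $\partial C\cart[0,\beta)\hookrightarrow C$ of width $\beta$ and an embedded transverse family $v\mapsto\partial C_v\subset L_v$ of copies of $\partial C$ in the nearby leaves $L_v$, $v\in(-\beta,\beta)$, with $\partial C_0=\partial C$ and the $\partial C_v$ pairwise disjoint in $M$.

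The heart of the argument is to push \emph{all} of $C$ across $\scrF$. Because $C$ is connected and simply connected, leafwise holonomy transport off $L$ is unobstructed and extends the transverse push of $\partial C$ to a continuous family of immersions $j_v\colon C\looparrowright L_v$ that agree with the product structure above near $\partial C$ and restrict to $\partial C_v$ on the boundary. One then proves that there is $\dd=\dd(\beta,k,M)>0$ such that $\Phi\colon C\cart(-\dd,\dd)\to M$, $\Phi(x,v)=j_v(x)$, is an embedding --- or at least has uniformly bounded multiplicity --- so that its image $\hat C$ is a foliated product region sitting inside the compact manifold $M$. The natural approach is an openness/limiting argument: the set of $v$ for which $j_v$ is an embedding is open and contains $0$; if it were a bounded interval with endpoint $v_*$, then letting $v\to v_*$ one extracts from the colliding transported sheets a limit configuration contradicting the fact that $\partial C$ is connected and carries the transverse product just constructed, that $C$ is connected, and that $M$ is compact. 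I expect \emph{this} to be the main obstacle: a priori $C$ could be coiled so tightly inside $M$ that an arbitrarily small transverse shift already destroys injectivity, and ruling this out is precisely where the foliation hypothesis, the fatness of the tubular neighbourhood of $\partial C$, and the compactness of $M$ all have to enter at once; it is also here that the merely $C^0$ transverse regularity of a $C^{2,0}$-foliation forces one to argue via the uniformly bounded geometry of the $C^2$ leaves rather than with transverse derivatives.

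Once $\hat C\cong C\cart(-\dd,\dd)$ is a foliated product region in $M$, the bound follows by a volume count. If $\betavol(C)>N$ then, by a greedy packing argument, $C$ contains on the order of $N$ pairwise disjoint leafwise balls of radius comparable to $\beta$; pushing these balls along $\Phi$ produces on the order of $N$ pairwise disjoint tubes in $M$, and using the uniformly bounded geometry of the leaves $L_v$ together with a uniform bound on the metric distortion of the transverse push over transverse distance at most $\dd$, each tube has $M$-volume at least some $v_0=v_0(\beta)>0$. Hence $N\,v_0\le\vol(M)$, so $\betavol(C)\le\vol(M)/v_0(\beta)=:K(\beta,k)$, and therefore $\mvolbeta(C)\le K(\beta,k)$. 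Since $C$ was an arbitrary compact codimension $0$ submanifold with smooth boundary satisfying i)--iii), this establishes the bounded homology property for $L$.
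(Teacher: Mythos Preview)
The paper does not contain a proof of this theorem; it is quoted as Theorem~2.6 of Schweitzer's article \cite{Schweitzer: Riemannian manifolds not quasi isometric to leaves in codimension one foliations} and used as a black box throughout Section~\ref{section: Schweitzers bounded homology property} and Section~\ref{section: non-leaves with trivial coarse homology}. There is therefore no proof in this paper to compare your proposal against.

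On the proposal itself: the outline assembles the right ingredients (quasi-isometry invariance of the property, trivial holonomy from simple connectedness of $C$ and $\partial C$, a transverse product near $\partial C$ coming from condition~iii), compactness of $M$), but the decisive step is precisely the one you flag and do not resolve. Your reduction $\mvolbeta(C)\le\betavol(C)$ is correct, yet it commits you to the strictly stronger claim that $\betavol(C)$ is uniformly bounded, and your mechanism for this is the existence of $\dd=\dd(\beta,k,M)$, independent of $C$, such that $\Phi\colon C\times(-\dd,\dd)\to M$ is an embedding or has uniformly bounded multiplicity. The openness/limiting sketch does not supply this: for a dense leaf $L$, points that are far apart in the leaf metric can be arbitrarily close transversally in $M$, so for any fixed $\dd>0$ a sufficiently large $C$ forces $\Phi$ to have arbitrarily high multiplicity. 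Conditions i)--iii) constrain $\partial C$ and the topology of $C$, not how the interior of $C$ winds through $M$, so they do not visibly obstruct this. Whether the stronger $\betavol$-bound even holds for leaves is not clear from what is written here; note that Definition~\ref{def: bounded homology property} is deliberately phrased in terms of $\mvolbeta$, and in Section~\ref{section: non-leaves with trivial coarse homology} the paper expends real effort to show that $\mvolbeta(\frakT_k')$ (and not merely the obvious $\betavol(\frakT_k')\sim 2^k$) is unbounded. Absent an argument producing the uniform $\dd$, your proposal is a reasonable plan that stops exactly where the substance of Schweitzer's proof begins.
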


\begin{figure}
 \centering
 \includegraphics[height=9.5cm]{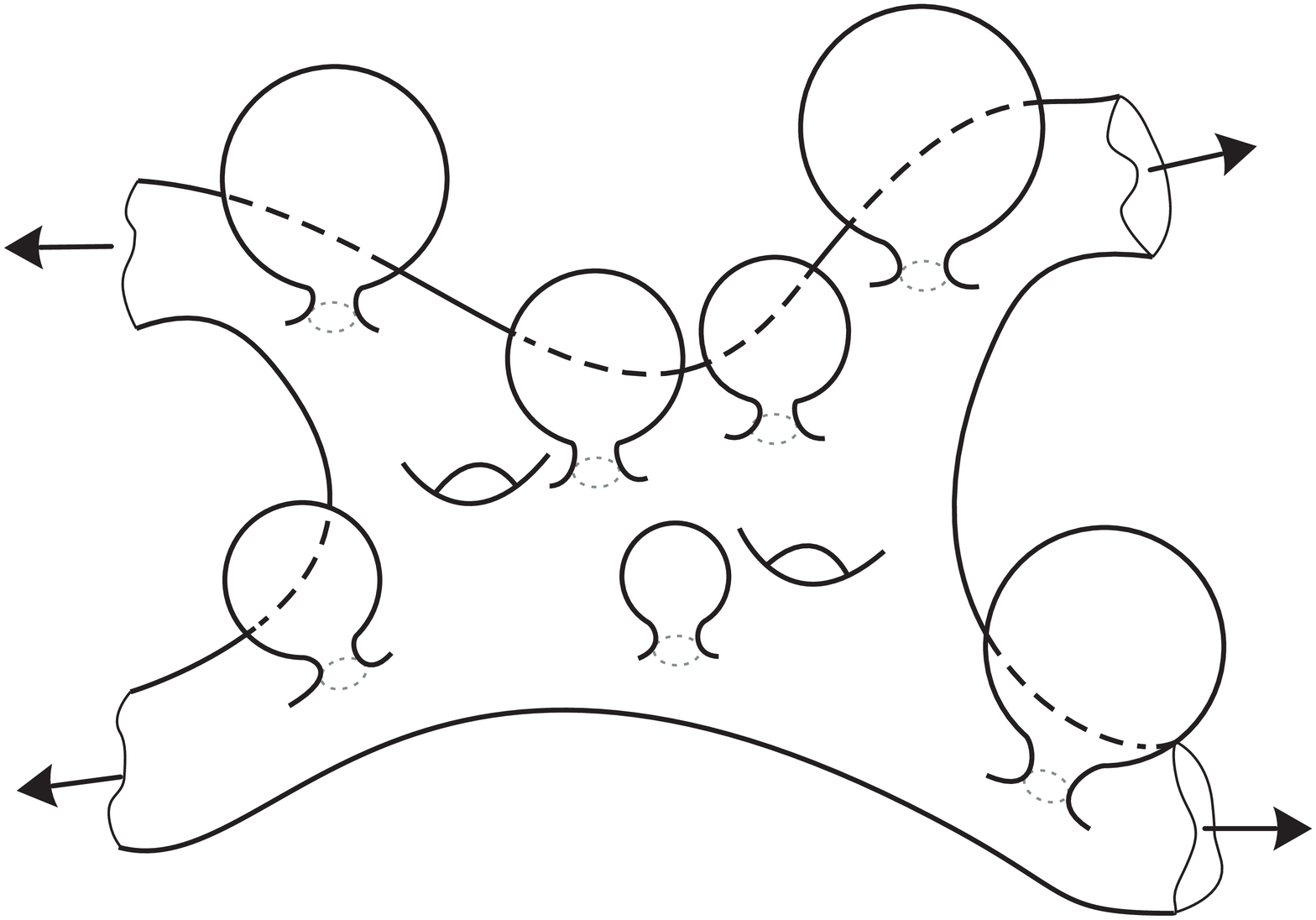}
\caption{$L$ with Schweitzer's non-leaf metric.\protect\footnotemark}
\label{fig: balloons attached}
\end{figure}
\footnotetext{The picture is taken from Schweitzer's article \cite{Schweitzer: Riemannian manifolds not quasi isometric to leaves in codimension one foliations}. I kindly thank him for giving his permission to reproduce it here.}

The non-leaves by Schweitzer are constructed by deforming any given metric to a metric not satisfying the bounded homology property. The deformation is done by inserting balloons: Since $(L,g)$ has bounded geometry, the injectivity radius is bounded from below by some $d>0$.  Hence every metric $d$-ball is topologically a ball.  Now choose a sequence of real numbers $d_i$ such that $d_i+2d<d\sub{i+1}$ and let $x_i$ be points in $L$ such that $d(x_0,x_i)=d_i$.  Choose moreover a sequence of real numbers $r_i$ converging to infinity.  And let $S^n(r_i)\minus B\sub{\nicefrac{d}{2}}(S)$ be the spheres of radius $r_i$, where a ball of radius $\nicefrac{d}{2}$ has been removed around the south pole.  Now replace every ball $B(x_i,\nicefrac{d}{2})$ in $L$ by the ``balloon'' $S^n(r_i)\minus B\sub{\nicefrac{d}{2}}(S)$ and on the annulus $B_d(x_i)\minus B\sub{\nicefrac{d}{2}}(x_i)$ interpolated smoothly between the round metric of $S^n(r_i)$ and the original metric $g$ on $L$.  This defines the balloon metric $g\sub{S}
$ on $L$ (
see Figure \ref{fig: balloons attached}).  Note that topologically we have  just replaced a ball by a ball and hence the new manifold is diffeomorphic to $L$.

It is now not hard to see that $(L,g_S)$ does not satisfy the bounded homology property, for the balloons $S^n(r_i)\minus B\sub{\nicefrac{d}{2}}(S)$ form a sequence of submanifolds $C_i$ with $\betavol(\partial C_i)=\betavol(\partial B\sub{\nicefrac{d}{2}}(S)$ but $\mvolbeta(C_i)$ tends to infinity as $i$ does.

\section{Computations of coarse homology}

In this section we briefly recall the most important definitions and facts about coarse homology and show that the coarse homology of the non-leaves constructed by Schweitzer is non-finitely generated (Proposition \ref{prop: g_SJ has infinitely generated coarse homology}). For more details about coarse homology, we refer the reader to \cite{Higson-Roe: On the coarse Baum-Connes conjecture} and \cite{Roe: Lectures on Coarse Geometry}. Whenever possible, we will omit to mention the coefficient ring of the homology theories.  All the results we prove hold for  arbitrary coefficients, though.

Recall that the \textit{locally finite homology} $H_k^{lf}(Z)$ of a locally compact topological space $Z$ is the homology of the chain complex based locally finite chains, i.e.\  possibly infinite formal sums of singular simplices $\sum\sub{\sigma}\al\sub{\sigma}\sigma, \sigma\colon \Delta^k\lra Z$ such that every compact subset of $Z$ intersects at most finitely many $\sigma$ with $\al\sub{\sigma}\neq 0$. The ordinary boundary map on singular simplices induces a well-defined boundary map on locally finite chains.

A \textit{coarsening sequence} of a proper metric space $X$ is a sequence of locally finite open coverings $\calU_1, \calU_2,\ldots$ such that the diameter of the sets in $\mathcal{U}_i$ is bounded from above by a constant $R_i$ and that the Lebesgue number of $\mathcal{U}\sub{i+1}$ is at least $R_i$.  Moreover, $R_i$ tends to infinity as $i$ does.  We denote by $\vert \mathcal{U}_i\vert$ the nerve of the covering $\mathcal{U}_i$, that is the simplicial complex with vertices $(U)$ given by the sets $U\in\calU_i$ and $k$-simplices $(U_0, \ldots, U_k)$ spanned by $U_0, \ldots, U_k\in \calU_i$ with $U_0\cap \ldots \cap U_k\neq \emptyset$.  Note that each $U_j\in \mathcal{U}_i$ lies in some $V_l\in \mathcal{U}\sub{i+1}$ and the choice of such an assignment $U_j\mapsto V_l$ induces a proper map $\vert \mathcal{U}_i\vert \rightarrow \vert \mathcal{U}\sub{i+1}\vert$.  In what follows, we will fix such an assignment and the induced maps will be called the \textit{coarsening maps}.  We will use term coarsening 
sequence both for the sequence of locally finite coverings $\calU_1, \calU_2, \ldots$ and for the sequence of their geometric realizations together with the coarsening maps $\vert \calU_1\vert \rightarrow \vert \calU_2\vert \rightarrow \ldots$  Since the coarsening maps are proper, the induced homomorphisms on locally finite homology give rise to a direct system
\[
 H\ind{*}{lf}(\vert \calU_1\vert)\lra H\ind{*}{lf}(\vert \calU_2\vert)\lra H\ind{*}{lf}(\vert \calU_3\vert) \lra\ldots 
\]

A very practical coarsening sequence is given as follows:  Let $Y$ be a $1$-dense subset of $X$, that is for any $x\in X$, there exists a $y\in Y$ such that $d_X(x,y)< 1$.  Assume further that $Y$ has no accumulation points.  Then for any radius $i\geq 1$, the collection of open balls of radius $i$, $\mathcal{B}_i(Y):=\{B_i(y)\}\sub{y\in Y}$ forms a locally finite open covering of $X$ with Lebesgue number at least $i-1$.  By letting the radii range over all natural numbers, we obtain a coarsening sequence $\left\{ \mathcal{B}_i(Y) \right\}\sub{i\in \N}$.  We set $\vert \mathcal{B}_i(Y)\vert=R_i(X;Y)$, but we will henceforth simply write $R_i(X)$ whenever the choice of a $1$-dense subset $Y$ as above is implicit.  We have natural inclusions $B_i(y)\hookrightarrow B\sub{i+1}(y)$ and the induced coarsening maps hence are just the inclusion of a subcomplex $R_i(X)\hookrightarrow R\sub{i+1}(X)$.  (The notation $R_i(X)$ is slightly abusive, since it usually denotes the $i$th Rips complex of $X$ in which the 
simplices are spanned by \textit{all} elements of $X$.  In our notation $R(X;Y)=R_i(X)$ is the $i$th Rips complex of $Y$.)

\begin{definition}[coarse homology]
 The \textit{coarse homology groups} of a proper metric space $(X,d)$ are defined as
\[
 HX_*(X)=\varinjlim_i H\ind{*}{lf}(\vert \mathcal{U}_i\vert),
\]
\end{definition}

One can show that up to natural isomorphism $HX_*(X)$ does not depend on the choice of the coarsening sequence.  Moreover, quasi-isometries induce isomorphisms on coarse homology.

For coarse homology there exists an analogue of the Mayer-Vietoris sequence, but we must require the decomposition of a metric space ${X=C\cup D}$ to be \textit{coarsely excisive}, that is for every $r>0$ there exists an $R>0$ such that 
\[
 B_r(C)\cap B_r(D)\subset B_R(C\cap D).
\]

\begin{prop}[Lemma 3.9, \cite{Mitchener: Coarse homology theories}]\label{prop: coarse Mayer-Vietoris Mitchener}
 Let $X$ be a proper metric space and $X=C\cup D$ be a coarsely excisive decomposition.  Then there exists a coarse Mayer-Vietoris sequence
\[
 \ldots\rightarrow HX_n(C\cap D)\rightarrow  HX_n(C)\oplus HX_n(D)\rightarrow HX_n(X)\rightarrow HX\sub{n-1}(C\cap D)\rightarrow \ldots
\]
\end{prop}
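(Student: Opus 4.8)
The plan is to compute every coarse homology group in the asserted sequence from one carefully chosen coarsening sequence for $X$, to produce an honest Mayer--Vietoris short exact sequence of locally finite chain complexes at each finite scale, and then to pass to the direct limit (which is exact). Concretely, choose a $1$-dense subset $Y\subseteq X$ without accumulation points of the form $Y=Y_C\cup Y_D$, where $Y_C$ is a maximal $\tfrac{1}{2}$-separated subset of $C$ and $Y_D$ a maximal $\tfrac{1}{2}$-separated subset of $D$; then $Y_C$ is $1$-dense in $C$, $Y_D$ is $1$-dense in $D$, and since $X=C\cup D$ every point of $Y$ lies in $C$ or in $D$. Let $K_i=R_i(X)$ be the nerve of $\{B_i(y)\}_{y\in Y}$, and inside $K_i$ let $A_i$ (resp.\ $B_i$) be the full subcomplex on the vertices $y$ with $\dist(y,C)<2i$ (resp.\ $\dist(y,D)<2i$). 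Since any simplex $\sigma$ of $K_i$ has all vertices within $2i$ of any one of them, and that one lies in $C$ or in $D$, we get $K_i=A_i\cup B_i$; moreover $A_i\cap B_i$ is the full subcomplex on $Y\cap B_{2i}(C)\cap B_{2i}(D)$. All of these nest compatibly with the coarsening inclusions $K_i\hookrightarrow K_{i+1}$.

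\textbf{Scale-wise Mayer--Vietoris and the limit.} For each $i$, since every simplex of $K_i$ lies in $A_i$ or in $B_i$, splitting locally finite simplicial chains gives a short exact sequence of chain complexes $0\to C_*^{lf}(A_i\cap B_i)\to C_*^{lf}(A_i)\oplus C_*^{lf}(B_i)\to C_*^{lf}(K_i)\to 0$ (splitting a locally finite chain by assigning each simplex of $A_i\cup B_i$ to one of the two subcomplexes preserves local finiteness), hence a long exact Mayer--Vietoris sequence in $H^{lf}_*$; these assemble, via the inclusions above, into a direct system of long exact sequences over $i$. Since filtered colimits of abelian groups (or $R$-modules) are exact and commute with finite direct sums, applying $\varinjlim_i$ yields the long exact sequence
\[
\ldots\to\varinjlim_i H_n^{lf}(A_i\cap B_i)\to\bigl(\varinjlim_i H_n^{lf}(A_i)\bigr)\oplus\bigl(\varinjlim_i H_n^{lf}(B_i)\bigr)\to\varinjlim_i H_n^{lf}(K_i)\to\varinjlim_i H_{n-1}^{lf}(A_i\cap B_i)\to\ldots
\]
and $\varinjlim_i H_n^{lf}(K_i)=HX_n(X)$, because $\{K_i\}$ is precisely the Rips coarsening sequence of the net $Y$ in $X$.

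\textbf{Identifying the remaining terms and assembling.} It remains to show $\varinjlim_i H_*^{lf}(A_i)\cong HX_*(C)$, $\varinjlim_i H_*^{lf}(B_i)\cong HX_*(D)$, and $\varinjlim_i H_*^{lf}(A_i\cap B_i)\cong HX_*(C\cap D)$. For the first, compare $\{A_i\}$ with the Rips coarsening sequence $\{R_i(C)\}$ of the net $Y_C$ of $C$: there is a simplicial containment $R_i(C)\subseteq A_i$, and a ``nearest net point'' simplicial map $A_i\to R_{\ell}(C)$ for a suitable $\ell=\ell(i)$ growing linearly in $i$; a short triangle-inequality computation shows both round trips are contiguous to the respective coarsening maps, and since all maps in sight are proper, these induced maps interleave the two direct systems, so the colimits agree and equal $HX_*(C)$. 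The same scheme handles $A_i\cap B_i$, compared with $\{R_i(C\cap D)\}$ — but here the vertex set $Y\cap B_{2i}(C)\cap B_{2i}(D)$ need not cluster near $C\cap D$, and it is exactly \emph{coarse excision} that supplies, for each $i$, a finite constant $R(2i)$ with $B_{2i}(C)\cap B_{2i}(D)\subseteq B_{R(2i)}(C\cap D)$; this lets one retract those vertices onto nearby net points of $C\cap D$ and build a comparison map $A_i\cap B_i\to R_{M_i}(C\cap D)$ with $M_i$ finite (controlled by $R(2i)$, and growing with $i$). Substituting these three identifications, together with $\varinjlim(P\oplus Q)\cong\varinjlim P\oplus\varinjlim Q$, into the exact sequence above gives the coarse Mayer--Vietoris sequence.

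\textbf{Main obstacle.} The delicate point is the last step — verifying that the auxiliary complexes $A_i$, $B_i$, and above all $A_i\cap B_i$ really compute the coarse homology of $C$, $D$, and $C\cap D$. This demands the somewhat fiddly contiguity/proper-homotopy bookkeeping showing the comparison maps interleave the direct systems, and it is the only place coarse excision is used: without a finite bound $R(2i)$ the vertices of $A_i\cap B_i$ need not approximate $C\cap D$ at all, and the identification $\varinjlim_i H_*^{lf}(A_i\cap B_i)\cong HX_*(C\cap D)$ breaks down. One must also check that the scale-shift constants $\ell(i)$ and $M_i$, though they grow with $i$, stay cofinal in $\N$, so that the interleaved systems genuinely have the same colimit.
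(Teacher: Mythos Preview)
The paper does not supply its own proof of this proposition: it is quoted as Lemma~3.9 of Mitchener's \emph{Coarse homology theories} and used as a black box throughout (in the proofs of Proposition~\ref{prop: g_SJ has infinitely generated coarse homology} and Lemma~\ref{lemma: deformation to non-bounded homology without changing coarse homology}). There is therefore no in-paper argument to compare your proposal against.

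That said, your outline is a correct and standard route to the result. You build the Rips coarsening from a net adapted to the decomposition, set up the scale-wise simplicial Mayer--Vietoris short exact sequence of locally finite chain complexes, and exploit exactness of filtered colimits to pass to coarse homology; you correctly locate the one essential use of the coarsely excisive hypothesis in identifying $\varinjlim_i H_*^{lf}(A_i\cap B_i)$ with $HX_*(C\cap D)$, and you flag the contiguity and cofinality checks that remain. Two small points you glossed over: you should also fix a $1$-dense discrete net $Y_{C\cap D}\subset C\cap D$ in order to run the comparison with the Rips complexes $R_i(C\cap D)$ (your $Y_C$ and $Y_D$ need not contain such a net), and you should say explicitly why the ``nearest net point'' maps are proper simplicial maps --- the preimage of any vertex is contained in a ball of controlled radius in the proper space $X$, hence meets the discrete set $Y$ in only finitely many points. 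With those details in place the argument goes through as sketched.
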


It is often quite hard and very inconvenient to compute the coarse homology via an explicit coarsening sequence.  For certain spaces, though, Higson and Roe \cite{Higson-Roe: On the coarse Baum-Connes conjecture} showed that the coarse homology already equals the locally finite homology.  

\begin{definition}[uniform contractibility]
A metric space $X$ is called \textit{uniformly contractible} if for every $r>0$ there exists an $R\geq r$ such that $B_r(x)$ is contractible in $B_R(x)$ for every $x\in X$.
\end{definition}

\begin{definition}[bounded coarse geometry]\label{def: bounded coarse geometry metric space}
 A proper metric space has \textit{bounded coarse geometry} if there exists some $\ee>0$ such that the $\ee$-capacity of any ball of radius $r$ (i.e. the maximal number of disjoint $\ee$-balls in $B_r$) is bounded by some $c_r$.
\end{definition}

Following the terminology of \cite{Higson-Roe: On the coarse Baum-Connes conjecture}, we understand a \textit{bounded geometry complex} to be a metric simplicial complex, i.e.\ a simplicial complex equipped with the path metric induced from the canonical metric on the simplices, which has bounded coarse geometry.

\begin{prop}[Proposition 3.8, \cite{Higson-Roe: On the coarse Baum-Connes conjecture}]\label{prop: coarsening map is isomorphism}
 If $(X,d)$ is a uniformly contractible bounded geometry complex, then $HX_*(X,d)$ and $H\ind{*}{lf}(X)$ are isomorphic.
\end{prop}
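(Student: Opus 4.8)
The plan is to compute $HX_*(X)$ using a convenient coarsening sequence and to compare the resulting direct system with the constant system on $H_*^{lf}(X)$. Concretely, choose a uniformly discrete $1$-dense subset $Y\subset X$ (for instance a maximal $\tfrac12$-separated set) and use the Rips-type coarsening sequence $R_1(X)\hookrightarrow R_2(X)\hookrightarrow\cdots$ associated to the locally finite covers $\mathcal{B}_i(Y)=\{B_i(y)\}_{y\in Y}$. Because $X$ has bounded coarse geometry, each nerve $R_i(X)$ is again a bounded geometry complex, and — being proper, as $X$ is — only finitely many of its simplices meet any bounded region. I would then produce, for every $i$, a pair of proper maps
\[
q_i\colon X\lra R_i(X),\qquad p_i\colon R_i(X)\lra X,
\]
and show that, up to proper homotopy, the $p_i$ constitute a morphism from the direct system $H_*^{lf}(R_1(X))\to H_*^{lf}(R_2(X))\to\cdots$ to the constant direct system with value $H_*^{lf}(X)$, that the $q_i$ give a morphism back, and that the two composites are the relevant identities in the colimit. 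Since $H_*^{lf}$ is invariant under proper homotopy equivalence and filtered colimits are exact, this gives $HX_*(X)=\varinjlim_i H_*^{lf}(R_i(X))\cong H_*^{lf}(X)$.

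The map $q_i$ is the usual nerve map: pick a partition of unity $\{\varphi_y\}_{y\in Y}$ subordinate to $\mathcal{B}_i(Y)$ and let $q_i(x)$ have barycentric coordinates $(\varphi_y(x))_{y\in Y}$ in $R_i(X)$. Local finiteness makes $q_i$ continuous and proper, and by construction $x$ lies within distance $i$ of every vertex of the carrier simplex of $q_i(x)$. The map $p_i$ is built by sending each vertex $y$ of $R_i(X)$ to the corresponding point of $X$ and extending over the skeleta by induction: an edge $(y,y')$ of $R_i(X)$ has $d(y,y')<2i$, so its endpoints are joined by a path; the boundary of a $2$-simplex maps to a loop of controlled length, which by uniform contractibility bounds a disk inside a ball of radius $R(C_1 i)$, where $R(\cdot)$ is the modulus of uniform contractibility; inductively, the $k$-skeleton is mapped using $R^{\circ k}$ applied to $2i$. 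This is exactly where the two hypotheses enter: uniform contractibility lets each cellular extension be carried out with a \emph{uniform} bound $D_i$ on how far the image of a simplex is spread, and bounded coarse geometry (via local finiteness of $R_i(X)$) then forces $p_i$ to be continuous, well defined and proper.

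It remains to check three compatibilities, all by exhibiting explicitly controlled — hence proper — homotopies. (a) Restricting $p_{i+1}$ along the coarsening inclusion $R_i(X)\hookrightarrow R_{i+1}(X)$ gives a map properly homotopic to $p_i$, since both spread each simplex over a bounded region and uniform contractibility supplies the interpolation; likewise $q_{i+1}$ is properly homotopic to the composite of $q_i$ with $R_i(X)\hookrightarrow R_{i+1}(X)$ (a straight-line homotopy inside a common simplex), so that $(p_i)$ and $(q_i)$ define morphisms of direct systems and hence homomorphisms $\alpha\colon HX_*(X)\to H_*^{lf}(X)$ and $\beta\colon H_*^{lf}(X)\to HX_*(X)$. (b) The composite $p_i\circ q_i$ displaces every point of $X$ by a bounded amount, hence is properly homotopic to $\mathrm{id}_X$ by straightening with uniform contractibility; in the colimit this gives $\alpha\circ\beta=\mathrm{id}$. (c) For $j$ large enough that the simplices of $R_j(X)$ can absorb the displacement $D_i$ of $p_i$, the composite $q_j\circ p_i$ is properly homotopic to the coarsening inclusion $R_i(X)\hookrightarrow R_j(X)$; passing to the colimit this yields $\beta\circ\alpha=\mathrm{id}$. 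Therefore $\alpha$ is an isomorphism.

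I expect the main obstacle to be the inductive construction of $p_i$ together with the homotopies in (a)–(c): one has to extend maps over every skeleton of an infinite complex while retaining a single uniform bound on the spread of each simplex, and at the same time keep track of properness of both the maps and the homotopies. This is precisely the step that forces both hypotheses into play — uniform contractibility to fill controlled spheres by controlled disks in a uniform way, and bounded coarse geometry so that ``bounded spread'' of a map on a bounded-geometry nerve translates into properness. Everything downstream is formal: proper-homotopy invariance of locally finite homology, exactness of filtered colimits, and the standard fact that a pair of eventually mutually inverse morphisms between a direct system and a constant system induces an isomorphism on colimits.
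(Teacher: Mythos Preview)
The paper does not give its own proof of this proposition; it merely quotes the result from Higson and Roe and uses it as a black box. Your sketch is essentially the original Higson--Roe argument: build the nerve map $q_i$ via a subordinate partition of unity, build a controlled inverse $p_i$ by sending vertices to centers and extending over skeleta using the uniform-contractibility modulus, and then verify the two composites are properly homotopic to the identity (resp.\ to a coarsening map) so that the induced maps on $\varinjlim H_*^{lf}$ are mutual inverses. The one point you leave implicit, and which is worth making explicit since it is exactly where bounded coarse geometry is indispensable, is that each $R_i(X)$ is \emph{finite-dimensional}: a ball of radius $2i$ in $X$ meets only boundedly many points of $Y$, so simplices of $R_i(X)$ have bounded dimension, and hence the skeleton-by-skeleton extension of $p_i$ terminates after finitely many iterations of the contractibility function, giving the uniform spread bound $D_i$ you need. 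With that said, your outline is correct and there is nothing to compare it against in the present paper.
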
 

\begin{rmk}\label{rmk: locally finite homology of [0,infty)}
In the following sections we will use the fact that \linebreak ${H\ind{*}{lf}([0,\infty))= \{0\}}$. This is most easily seen by introducing a locally finite $\Delta$-homology analogous to the singular $\Delta$-homology presented in \cite{Hatcher: Algebraic Topology} and showing that it is isomorphic to the ordinary locally finite homology. The details can be found in \cite{Schmidt: Coarse topology of leaves of foliations}, Chapter $3$. 
\end{rmk}

\subsection{Coarse homology of Schweitzer's non-leaves}

In this section, we show that the coarse homology of the non-leaves constructed by Schweitzer in \cite{Schweitzer: Riemannian manifolds not quasi isometric to leaves in codimension one foliations} is non-finitely generated. Very similar arguments show that also the coarse homology of the non-leaves constructed by Zeghib in \cite{Zeghib: An expample of a 2-dimensional no leaf} is non-finitely generated.

Schweitzer's construction presented in Section \ref{section: Schweitzers bounded homology property} is closely related  to the following \textit{balloon space} consisting of spheres with radii tending towards infinity attached to the real line, which is defined in \cite{Hanke-Kotschick-Roe-Schick: Coarse topology enlargeability and essentialness}
\[
 B=[0,\infty)\bigcup\sub{i\in \N}\left( \cup_i S^n(i)\right),
\]
where $S^n(i)$ denotes the $n$-dimensional sphere of radius $i$ (see Figure \ref{fig: balloon space}).  They compute the coarse homology of $B$ to be
\[                                                                                                                 
  HX_n(B)\simeq \big(\prod_i \Z\big)/\big(\bigoplus_i \Z\big).
\]

\begin{figure}
\labellist
\pinlabel $1$ at 12 -3
\pinlabel $2$ at 44.5 -3
\pinlabel $3$ at 76.5 -3
\pinlabel $4$ at 108.5 -3
\pinlabel $5$ at 141 -3
\endlabellist
 \centering
 \includegraphics[width=13.5cm]{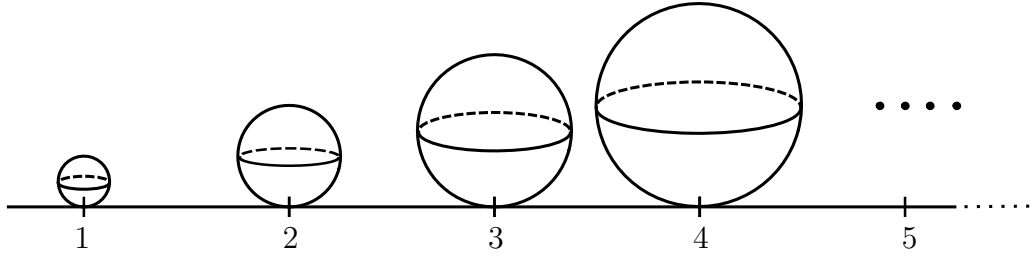}
\caption{The balloon space $B$.}
\label{fig: balloon space}
\end{figure}

The following proposition shows that an adapted balloon space $B'$ coarsely embeds into the non-leaves $(L,g_S)$ constructed by Schweitzer and shows that this embedding is injective in coarse homology.  

\begin{prop}\label{prop: g_SJ has infinitely generated coarse homology}
 Let $(L,g)$ be a complete connected open Riemannian manifold, of dimension $n\geq 2$ and let $g\sub{S}$ be the deformation of $g$ to the above described balloon metric performed along a ray in $L$.  Then $HX_n(L,g\sub{S})$ is not finitely generated.  In fact 
 \[
 HX_n(B)=\big(\prod_i\Z\big) / \big(\bigoplus_i\Z\big) \subset HX_n(L,g\sub{S}).
 \]
\end{prop}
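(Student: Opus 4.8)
The plan is to exhibit an isometrically embedded (or at least coarsely embedded) copy of an adapted balloon space $B'$ inside $(L,g_S)$ and then to argue that this embedding is split-injective on degree-$n$ coarse homology, so that the known computation $HX_n(B')\cong(\prod_i\Z)/(\bigoplus_i\Z)$ transfers to a subgroup of $HX_n(L,g_S)$. First I would set up $B'$: in Schweitzer's construction the balloons $S^n(r_i)\smallsetminus B_{d/2}(S)$ are attached along the ray $\gamma$ through the points $x_i$ with $d(x_0,x_i)=d_i$. Let $B'$ be the subspace of $(L,g_S)$ consisting of the ray $\gamma$ together with all the attached balloons. This is quasi-isometric to the balloon space $B=[0,\infty)\cup\bigcup_i S^n(i)$ of \cite{Hanke-Kotschick-Roe-Schick: Coarse topology enlargeability and essentialness}: the ray $\gamma$ is quasi-isometric to $[0,\infty)$, the balloon attached at $x_i$ has diameter comparable to $r_i$, and one checks that replacing the radii $r_i$ (which tend to infinity but perhaps not through the integers) by the integers $i$ yields a quasi-isometric space — coarse homology only sees the coarse picture, so $HX_n(B')\cong HX_n(B)\cong(\prod_i\Z)/(\bigoplus_i\Z)$.

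Next I would produce the coarse embedding $\iota\colon B'\hookrightarrow (L,g_S)$, which is just the inclusion, and a coarse map $\rho\colon (L,g_S)\to B'$ retracting onto $B'$: away from the balloons $g_S$ agrees with $g$, and $(L,g)$ is a complete manifold, so one can define $\rho$ by first collapsing each balloon $S^n(r_i)\smallsetminus B_{d/2}(S)$ onto its boundary circle $\partial B_{d/2}(x_i)\subset\gamma$-neighbourhood (the balloon has bounded-diameter boundary, so this is a coarse map on the balloon, expanding distances within the balloon but that is allowed for coarse maps — wait, coarse maps must be \emph{uniformly expansive}, i.e.\ bornologous; collapsing a large balloon to a point is bornologous since it only decreases distances) and then coarsely retracting the rest of $(L,g)$ onto the ray $\gamma$ using a nearest-point-on-$\gamma$ type map, which is coarse because $L$ has bounded geometry. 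Then $\rho\circ\iota$ is coarsely close to the identity on $B'$, so on coarse homology $\rho_*\circ\iota_*=\mathrm{id}$ on $HX_n(B')$, whence $\iota_*$ is injective and its image is a direct summand. This gives $(\prod_i\Z)/(\bigoplus_i\Z)\subset HX_n(L,g_S)$, and since this group is not finitely generated — indeed it contains a free abelian subgroup of infinite rank (the classes of the individual balloons) — $HX_n(L,g_S)$ is not finitely generated.

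The step I expect to be the main obstacle is the construction of the coarse retraction $\rho\colon (L,g_S)\to B'$, specifically verifying that it is genuinely a coarse map (bornologous and proper/metrically proper) uniformly in $i$. The delicate point is the interpolation region: on each annulus $B_d(x_i)\smallsetminus B_{d/2}(x_i)$ the metric $g_S$ transitions between the round sphere metric of radius $r_i$ and $g$, and one must check that $\rho$ restricted there does not distort distances in an $i$-dependent way — this should follow from the fact that the interpolation happens on an annulus of fixed $g$-width $d/2$, but it requires care. An alternative that avoids constructing $\rho$ globally: use the coarse Mayer–Vietoris sequence of Proposition \ref{prop: coarse Mayer-Vietoris Mitchener} for a coarsely excisive decomposition $(L,g_S)=B'\cup Z$, where $Z$ is (a neighbourhood of) the complement, show $HX_n(Z)$ and $HX_{n}(B'\cap Z)$ are controlled (the intersection is a disjoint union of bounded-geometry annuli, coarsely a ray, so $HX_n$ of it vanishes in degree $n\ge2$ by Remark \ref{rmk: locally finite homology of [0,infty)}-type reasoning), and read off that $HX_n(B')$ injects into $HX_n(L,g_S)$; this may in fact be the cleaner route and I would likely present it that way, using Proposition \ref{prop: coarsening map is isomorphism} to identify the relevant coarse homology groups with locally finite homology where convenient.
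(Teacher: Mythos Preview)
Your primary route---constructing a coarse retraction $\rho\colon (L,g_S)\to B'$---has a genuine gap. The problematic step is ``coarsely retracting the rest of $(L,g)$ onto the ray $\gamma$ using a nearest-point-on-$\gamma$ type map, which is coarse because $L$ has bounded geometry.'' Bounded geometry does not help here: a nearest-point projection onto a ray is typically $1$-Lipschitz (hence bornologous) but fails to be \emph{metrically proper}. Already for $L=\R^n$ with the Euclidean metric, the preimage of a single point on $\gamma$ is an unbounded affine half-space, so $\rho$ does not induce a map on coarse homology. In general there is no reason for a ray in an arbitrary bounded-geometry manifold to be a coarse retract; if it were, one would be forcing $HX_*(L,g)$ to vanish in all positive degrees, which is certainly false for, say, $L=\R^2$. (There is also a smaller confusion in your description: for a retraction onto $B'$ you want $\rho$ to be the identity on the balloons, not to collapse them to their boundary spheres; collapsing them would make $\rho\circ\iota$ far from the identity on $B'$.)

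Your alternative Mayer--Vietoris route is the correct idea and is essentially what the paper does, but the paper organises the decomposition more cleanly and thereby avoids having to ``control $HX_n(Z)$.'' Rather than decomposing $(L,g_S)$ directly, the paper first observes that $(L,g_S)$ is quasi-isometric to the space $L'$ obtained from $(L,g)$ by attaching the full spheres $S^n(r_i)$ at the points $x_i$ (the south pole glued to $x_i$). Inside $L'$ one then has the coarsely excisive decomposition $L'=(L,g)\cup B'$ with intersection exactly the ray $[0,\infty)$. Since $HX_*([0,\infty))=0$ by Proposition~\ref{prop: coarsening map is isomorphism} and Remark~\ref{rmk: locally finite homology of [0,infty)}, the Mayer--Vietoris sequence of Proposition~\ref{prop: coarse Mayer-Vietoris Mitchener} collapses to an isomorphism
\[
HX_k(L,g_S)\simeq HX_k(L')\simeq HX_k(L,g)\oplus HX_k(B'),
\]
so one gets not just an injection but a direct-sum splitting, with no need to analyse the coarse homology of the complementary piece $Z$ at all. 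The point is that passing to $L'$ lets the two pieces overlap in something one-dimensional and contractible; your proposed decomposition $(L,g_S)=B'\cup Z$ leaves the intersection and the complement rather implicit, and verifying that it is coarsely excisive with the intersection coarsely a ray would in effect amount to carrying out the same reduction.
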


\begin{proof} 
We shall use the notation of Section \ref{section: Schweitzers bounded homology property}. Let $\ga$ be a ray in $(L,g)$ and let $x_i=\ga(d_i), i=0, 1, \ldots$ be the sequence of points in $L$ where we have inserted balloons of radius $r_i$ tending towards infinity. Note that $(L,g\sub{S})$ and the space $L'$ which results from gluing the south poles of the $S^n(r_i)$ to the $x_i$ are quasi-isometric if we equip $L'$ with the path metric induced from $(L,g)$ and the $S^n(r_i)$.  Hence their coarse homology groups are isomorphic and we can work with $L'$ instead of $(L,g\sub{S})$.  Let $B'$ be the balloon space which is given by attaching an $n$-sphere $S^n(r_i)$ of radius $i$ to $d_i\in [0,\infty)$ for every $i\in \N$. In particular, we can consider $B'$ as a subspace of $L'$.  In what follows, $L$ is understood to be the `original' Riemannian manifold $(L,g)$.

Since $L'$ carries the path metric, $L'= L\cup B'$ is a coarsely excisive decomposition with $L\cap B'=[0,\infty)$ and by Proposition \ref{prop: coarse Mayer-Vietoris Mitchener} we have a Mayer-Vietoris sequence
\[
 \ldots \rightarrow HX_k([0,\infty))\rightarrow HX_k(L)\oplus HX_k(B')\rightarrow HX_k(L')\rightarrow HX\sub{k-1}([0,\infty))\rightarrow\ldots
\]
But $[0,\infty)$ is a uniformly contractible bounded geometry complex and thus Proposition \ref{prop: coarsening map is isomorphism} implies that $HX_k([0,\infty))\simeq H\ind{k}{lf}([0,\infty))$.  Since $H\ind{k}{lf}([0,\infty))=\{0\}$ for all $k\in \N$ (see Remark \ref{rmk: locally finite homology of [0,infty)}), the above sequence breaks down to 
\[
 0\lra HX_k(L)\oplus HX_k(B')\lra HX_k(L')\lra 0
\]
and thus 
\[
HX_k(L,g\sub{S})\simeq HX_k(L')\simeq  HX_k(L,g)\oplus HX_k(B').
\]
As in \cite{Hanke-Kotschick-Roe-Schick: Coarse topology enlargeability and essentialness}, we can choose a coarsening sequence $\calU_k$ for $B'$ such that $\vert \calU_k\vert$ is properly homotopic to $B'$ with the first $k$ spheres collapsed to their respective south soles.  Hence we find again that
\[
 HX_n(B')\simeq \varinjlim_k \big(\prod_i \Z\big)/\big(\bigoplus\ind{i=1}{k} \Z\big) \simeq \big(\prod_i \Z\big)/\big(\bigoplus_i \Z\big).
\qedhere
\]
\end{proof}

\subsection{Leaves with non-finitely generated coarse homology}

In this section we will prove Theorem \ref{thm: leaves with non-finitely generated HX1}. This will follow from the fact that there exist foliations of compact manifolds with leaves that have infinitely many ends.  We will show that the degree $1$ coarse homology of such leaves with any metric induced from the foliated manifold is non-finitely generated.  This follows from the following proposition according to which $k$ distinct ends in a proper geodesic space span a free Abelian subgroup of rank $k-1$ in the degree $1$ coarse homology.  (Cf.\ Prop.\ 2.25, \cite{Roe: Coarse Cohomology and Index Theory on Complete Riemannian Manifolds} for the analogous statement for coarse cohomology.)

\begin{prop}\label{prop: ends create degree 1 coarse homology}
 Let $(X,d)$ be a proper, connected, geodesic space  with \linebreak ${k\in \N\cup \{\infty\}}$ ends.  Then $HX_1\left((X,d);\Z\right)$ contains a subgroup isomorphic to $\oplus\ind{i=1}{k-1}\Z$.
\end{prop}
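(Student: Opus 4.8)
The plan is to construct, for each pair of distinct ends, an explicit class in $HX_1(X;\Z)$ realized by a bi-infinite geodesic, and to show these classes generate a subgroup of the claimed rank. First I would fix a basepoint $x_0 \in X$ and, for each end $\epsilon$ of $X$, choose a proper ray $\ga_\epsilon \colon [0,\infty) \to X$ converging to $\epsilon$; since $X$ is proper and geodesic such rays exist. For two distinct ends $\epsilon, \epsilon'$, the concatenation of $\ga_{\epsilon'}$ (reversed) with $\ga_\epsilon$ gives a proper map $\R \to X$, hence a locally finite $1$-cycle. Passing to a coarsening sequence $\{\calU_i\}$ of $X$ — concretely the Rips-type complexes $R_i(X)$ built on a $1$-dense subset $Y$ without accumulation points — each such proper line pushes forward to a locally finite $1$-cycle in $\vert \calU_i\vert$, compatibly with the coarsening maps, and thus determines a class $[\epsilon, \epsilon'] \in HX_1(X;\Z) = \varinjlim_i H_1^{lf}(\vert \calU_i\vert)$. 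These classes are clearly antisymmetric and satisfy the cocycle-type relation $[\epsilon,\epsilon'] + [\epsilon',\epsilon''] = [\epsilon,\epsilon'']$, because the difference of the corresponding locally finite cycles differs by the locally finite boundary of the half-line regions they cobound; so fixing one end $\epsilon_0$ and setting $e_j = [\epsilon_0, \epsilon_j]$ for the remaining ends gives candidate generators.

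The substance is to prove these classes are linearly independent, i.e.\ that a nontrivial finite integer combination $\sum_j n_j e_j$ is never zero in the direct limit. For this I would produce a "detecting" homomorphism $HX_1(X;\Z) \to \prod'$ of some product of copies of $\Z$ indexed by the ends, counting the net multiplicity with which a locally finite $1$-cycle "escapes through" each end. Concretely: choose an exhaustion of $X$ by compact sets $K_1 \subset K_2 \subset \cdots$ such that each $X \minus K_m$ has its unbounded components in bijection with a fixed finite set of ends once $m$ is large; a locally finite $1$-cycle, restricted near each such component, has a well-defined flux (the algebraic intersection number with a separating hypersurface / the image of the cycle in the relative homology $H_1^{lf}(X, X \minus U_\epsilon)$ of an end neighbourhood $U_\epsilon$). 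One checks this flux is (i) invariant under locally finite boundaries, (ii) stable under the coarsening maps, hence descends to a homomorphism on $HX_1$, and (iii) sends $e_j$ to the standard basis vector $\delta_j - \delta_{0}$-type element. Since the images of $e_1, \dots, e_{k-1}$ are visibly independent in the target, the $e_j$ are independent in $HX_1(X;\Z)$, giving the desired $\oplus_{i=1}^{k-1}\Z$. When $k = \infty$ one runs the same argument over each finite subfamily of ends.

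The main obstacle I expect is making the flux homomorphism genuinely well-defined \emph{on the coarse level}, not just on $H_1^{lf}(X)$: a locally finite cycle in a nerve $\vert \calU_i\vert$ lives in a thickened model of $X$, and one must argue that "which end a simplex escapes through" is unambiguous for $i$ large compared to the diameter of $K_m$, and that the coarsening maps do not mix the ends up — this is where properness of $X$ and the definition of ends via compact exhaustions are used essentially. A cleaner route, which I would try first, is to invoke a coarse Mayer–Vietoris argument in the spirit of Proposition \ref{prop: coarse Mayer-Vietoris Mitchener}: decompose $X$ coarsely along a compact (hence coarsely negligible) separating set into the end-neighbourhoods, note $HX_*$ of a compact set vanishes in positive degrees and $HX_0$ of a coarsely connected space is $\Z$, and read off that $HX_1(X)$ receives a copy of $\ker\big(\bigoplus_{\text{ends}} \Z \to \Z\big) \cong \oplus_{i=1}^{k-1} \Z$ from the connecting map — reducing the whole statement to bookkeeping about $HX_0$ of the pieces, which is elementary. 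The subtlety there is that the separating set is only \emph{coarsely} a point, so one still needs the end-neighbourhoods to form a coarsely excisive cover, which again comes down to properness of $X$.
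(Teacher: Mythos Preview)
Your proposal is correct and shares the paper's starting point exactly: the paper also builds the cycles $z_{j,j+1}=-r_j+r_{j+1}$ from pairs of proper rays emanating from a common basepoint, and these are your classes $[\epsilon_0,\epsilon_j]$. The difference lies in how linear independence is established and how the passage to the coarse limit is handled.

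The paper avoids the difficulty you flag --- making a flux-type invariant well-defined \emph{at the coarse level} --- by never attempting it. Instead it proves the statement first for any locally finite simplicial complex $S$ (Lemma~\ref{lemma: rank k-1 abelian subgroup}): choose a compact $K$ separating the ends and apply the ordinary \emph{locally finite} Mayer--Vietoris sequence for $S=K'\cup(S\setminus K)$; the connecting map $\partial\colon H_1^{lf}(S)\to H_0^{lf}(\overline{S\setminus K}\cap K')$ sends $z_{j,j+1}$ to $r_{j+1}(n_{j+1})-r_j(n_j)$, and these are visibly independent in $\ker\varphi\cong\oplus^{k-1}\Z$. This is precisely your flux map, but built classically at a fixed simplicial level. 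The coarse step is then reduced to a one-line observation: since $X$ is proper and geodesic, each Rips complex $R_i(X)$ has the same ends as $X$ (Remark~\ref{remark: remark on ends of coarsenings}), so the lemma applies to every $R_i(X)$, and the coarsening maps $R_i(X)\hookrightarrow R_{i+1}(X)$ carry the generators $z_{j,j+1}$ to the corresponding generators. Independence in each $H_1^{lf}(R_i(X))$ then gives independence in the direct limit.

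What this buys over your route: the locally finite Mayer--Vietoris at a fixed level is textbook, so the only coarse input needed is the quasi-isometry invariance of ends for proper geodesic spaces. Your ``cleaner'' coarse Mayer--Vietoris alternative, by contrast, would require a coarsely excisive decomposition into end-neighbourhoods, which is awkward when $k=\infty$ (no single finite decomposition works) and still requires identifying $HX_0$ of each piece; the paper's level-by-level argument handles finite and infinite $k$ uniformly via the increasing union $\oplus_{j=1}^{l-1}\Z\{z_{j,j+1}\}$.
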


Note that not all elements in the degree one coarse homology originate from the ends of the space.  Take for example $X$ to be the $1$-dimensional balloon space from \cite{Hanke-Kotschick-Roe-Schick: Coarse topology enlargeability and essentialness}, then $X$ has just one end, but the coarse homology ${HX_1(X)=\prod_j \Z/\oplus_j\Z}$ is not even finitely generated.

The proof of Proposition \ref{prop: ends create degree 1 coarse homology} will occupy the remainder of this section.  Before, we show how the proposition implies Theorem \ref{thm: leaves with non-finitely generated HX1}:

\begin{proof}[Proof of Theorem \ref{thm: leaves with non-finitely generated HX1}]
By a result of Cantwell and Conlon (Theorem A, \cite{Cantwell-Conlon: Endsets of leaves}) every compact, totally disconnected, metrizable space $E$ can be realized as the endspace of a $2$-dimensional leaf $\Sigma$ in a codimension one foliation of a compact $3$-manifold $M$.  In particular there exist surfaces with infinitely many ends which are diffeomorphic to a leaf in a foliation of a compact $3$-manifold.  (Such leaves can also be constructed more elementarily by turbulizing a linear foliation of $T^3$ by dense cylinders (Example 4.3.10, \cite{Candel-Conlon: Foliations I}).)  Simply by taking products $M\times N^d\times N^c$ with compact manifolds and considering the leaf $\Sigma \times N^d$, we find that in every leaf-dimension at least $2$ and every codimension at least $1$, there exist leaves in foliations of compact manifolds with leaves that have infinitely many ends.

Let $(M, \scrF)$ be such a foliation and $L$ be a leaf of $\scrF$ with infinitely many ends.  Then every metric $g$ on $M$ induces a complete metric $\iota^*g$ on $L$.  In particular $(L,\iota^*g)$ is a proper geodesic space with infinitely many ends.  Hence we can apply Proposition \ref{prop: ends create degree 1 coarse homology} to see that $HX_1(L,\iota^*g)$ contains a free Abelian subgroup of infinite rank and hence cannot be finitely generated.
\end{proof}

\subsubsection{Ends of a topological space}\label{section: ends of a topological space}

Let $X$ be a topological space.  By an \textit{end of $X$} we mean the equivalence class of a proper ray $r\colon [0,\infty)\lra X$, where two rays $r, r'$ are equivalent if for each compact subset $K\subset X$ there exist a $t_K>0$ such that $r([t_K,\infty))$ and $r'(t_K,\infty))$ lie in the same path component of $X\minus K$.  We denote by $\calE(X)$ the set of ends and let $e(X)$ be the cardinality of $\calE(X)$.

The number of ends is in general not a quasi-isometry invariant of metric spaces and thus is in general not detected by the coarse homology.  Let for example $X=\left(\Q\times \R\right)\cup \left( \R\times \{0\}\right)$ with the metric induced from $\R^2$.  Then $X$ is a connected metric space and there exists a bijection from $\mathcal{E}(X)$ to $\Q\cup\Q$. But $X$ is quasi-isometric to $\R^2$ which has only one end.  For proper geodesic spaces this does not happen.

\begin{lemma}[8.29 Proposition, \cite{Bridson-Haefliger: Metric Spaces of Non-Positive Curvature}]\label{lemma: number of ends quasi-isometry invariant}
 For proper geodesic spaces $X$ and $Y$, every quasi-isometry $f\colon X\lra Y$ induces a bijection $\calE(X)\lra \calE(Y)$.
\end{lemma}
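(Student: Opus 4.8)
The strategy is to reduce the abstract statement about ends — which are equivalence classes of proper rays — to the concrete statement that a quasi-isometry $f\colon X\lra Y$ sends proper rays to rays that stay boundedly close to proper rays, and respects the equivalence relation of ending up in the same path component of complements of compact sets. Throughout, write $f$ for an $(L,C)$-quasi-isometry, so $\tfrac{1}{L}d_X(a,b)-C\le d_Y(f(a),f(b))\le L\,d_X(a,b)+C$ for all $a,b\in X$, with image $C$-dense in $Y$. Since $X$ and $Y$ are proper geodesic spaces, closed balls are compact, so a subset of $X$ is relatively compact iff it is bounded, and likewise in $Y$; I will use this freely to replace ``compact $K$'' by ``metric ball $B_R$'' in the definition of ends.

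First I would construct the map $\calE(X)\lra\calE(Y)$. Given a proper ray $r\colon[0,\infty)\lra X$, the composite $f\circ r$ need not be continuous, but it is a \emph{quasi-geodesic ray}: it satisfies the quasi-isometric inequalities on pairs of parameters, and it is proper because $f$ is (preimages of bounded sets under $f$ are bounded, since $d_X(a,b)\le L(d_Y(f(a),f(b))+C)$, so $d_X(r(t),r(0))\to\infty$ forces $d_Y(f(r(t)),f(r(0)))\to\infty$). Using that $Y$ is geodesic, one connects the points $f(r(0)),f(r(1)),f(r(2)),\dots$ by geodesic segments to obtain an honest proper continuous ray $\bar r$ in $Y$ whose Hausdorff distance from $f\circ r$ is bounded by a constant $D=D(L,C)$. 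I would define $\calE(f)[r]:=[\bar r]$. To see this is well defined I must check two things: (a) replacing the interpolation points or the choice of connecting geodesics changes $\bar r$ only within bounded Hausdorff distance, and two proper rays at bounded Hausdorff distance are equivalent as ends (if $r',r''$ satisfy $d_Y(r'(t),\operatorname{im}r'')\le D$ for all $t$, then outside $B_{R+D}$ any point of $r'$ is connected to a point of $r''$ by a short path staying outside $B_R$, because short paths near infinity avoid a fixed ball); and (b) if $r_1\sim r_2$ in $X$, then $\bar r_1\sim\bar r_2$ in $Y$ — here one takes the path in $X\minus B_R$ joining tails of $r_1$ and $r_2$, pushes it forward by $f$, interpolates by geodesics to get a bounded-length corridor in $Y$ staying outside $B_{R/L-C}$ (up to the usual constants), which exhibits the tails of $\bar r_1$ and $\bar r_2$ in the same path component of the complement of a large ball. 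Applying the same construction to a quasi-inverse $g\colon Y\lra X$ of $f$ yields $\calE(g)\colon\calE(Y)\lra\calE(X)$.

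Finally I would show $\calE(g)$ and $\calE(f)$ are mutually inverse. Since $g\circ f$ is within bounded distance of $\id_X$ (this is what it means for $g$ to be a quasi-inverse), for any proper ray $r$ in $X$ the ray $g\circ f\circ r$ is within bounded Hausdorff distance of $r$, hence defines the same end; chasing through the interpolation steps, $\calE(g)(\calE(f)[r])=[g\circ f\circ r]=[r]$, and symmetrically $\calE(f)\circ\calE(g)=\id$. Thus $\calE(f)$ is a bijection.

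\textbf{Main obstacle.} The crux is the ``corridor'' argument in point (b): one must turn a path in $X$ avoiding a large compact set into a genuine continuous path in $Y$ that still avoids a large compact set, despite $f$ being neither continuous nor injective. The key observation making this work is properness in the strong sense — a quasi-isometry of proper geodesic spaces is \emph{metrically proper}, so the preimage of a bounded set is bounded, which forces the forward image (and its geodesic interpolation, whose diameter is controlled because consecutive sample points are a bounded distance apart) of a far-away path to remain far away. Making the bookkeeping of all the additive and multiplicative constants uniform — so that a \emph{single} radius $R'$ works for the complement in $Y$ given $R$ in $X$ — is the only genuinely delicate part; everything else is formal manipulation of the quasi-isometry inequalities.
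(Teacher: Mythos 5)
Your plan is correct, and it is essentially the standard argument: push a proper ray forward under $f$, note that $f\circ r$ is metrically proper because of the lower quasi-isometry inequality, geodesically interpolate between sample points to get a genuine proper ray at bounded Hausdorff distance, check that bounded Hausdorff distance and push-forward of ``corridors'' outside large balls preserve the end relation, and conclude bijectivity via a quasi-inverse. For comparison: the paper does not prove this lemma at all --- it simply quotes Proposition 8.29 of Bridson--Haefliger --- so your write-up is a self-contained reconstruction of exactly the proof found in that reference rather than a different route. The only spot where your sketch compresses a real (if routine) step is in item (a): to conclude that two proper rays $r',r''$ at Hausdorff distance $\le D$ define the same end, it is not quite enough that each point of $r'$ has a short connection to \emph{some} point of $r''$ outside a given ball; you must land on the \emph{tail} of $r''$. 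This is repaired by properness of $r''$: its initial segment $r''([0,S])$ is bounded, so once $r'(t)$ leaves a sufficiently large ball the nearby point $r''(s_t)$ automatically has $s_t>S$, and $r''([S,\infty))$ already avoids the prescribed compact set. This falls under the constant-bookkeeping you explicitly flag, so I regard the proposal as sound.
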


It turns out that if $f$ is just the isometric embedding of a subspace, the induced map on the end spaces can be chosen as the embedding of rays.  It is moreover not hard to see that a metric space is quasi-isometric to any of its coarsenings. Thus we get the following remark:

\begin{rmk}\label{remark: remark on ends of coarsenings}
 Let $\calU_1, \calU_2,\ldots$ be a coarsening sequence for $X$, then the above lemma implies that $e(X)= e(\vert \calU_i\vert)$.  That is, by coarsening a geodesic space we do not create or loose ends.  Moreover, for $\vert U_i\vert =R_i(X)$ the bijections $\calE(R_i(X))\lra \calE(R\sub{i+1}(X))$ are induced by the inclusion of rays from $R_i(X)$ into $R\sub{i+1}(X)$.
\end{rmk}

\subsubsection{Proof of Proposition \ref{prop: ends create degree 1 coarse homology}}

We will prove Proposition \ref{prop: ends create degree 1 coarse homology} as follows.  First, we will show how two distinct ends of a space give rise to a locally finite $1$-chain and when we are dealing with a locally finite simplicial complex $S$, these chains will generate a free Abelian subgroup on $e(S)-1$ generators in the degree $1$ locally finite homology.  If we let $X$ be a proper geodesic space, Remark \ref{remark: remark on ends of coarsenings} implies that $e(X)=e(R_i(X))$ and hence all locally finite homology groups $H\ind{1}{lf}(R_i(X))$ contain a free rank $e(X)-1$ Abelian subgroup.  It will be easy to see, that these subgroups map to each other under the coarsening maps and hence $HX_1(X)$ also contains a free rank $e(X)-1$ Abelian subgroup.  In particular, for infinitely many ends, $HX_1(X)$ cannot be finitely generated.
 
\begin{lemma}\label{lemma: rank k-1 abelian subgroup}
Let $S$ be a locally finite simplicial complex with ${k\in \N\cup \{\infty\}}$ ends.  Then    $H\ind{1}{lf}(S;\Z)$ contains a subgroup isomorphic to $\oplus\ind{j=1}{k-1}\Z$.
\end{lemma}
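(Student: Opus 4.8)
The plan is to construct, for each pair of ends, an explicit locally finite $1$-cycle, and then show that these cycles are linearly independent in $H\ind{1}{lf}(S;\Z)$ by pairing them against a suitable collection of locally finite $0$-cochains (or equivalently, by using the locally finite $0$-th homology to detect them). First I would fix a basepoint vertex $v_0\in S$ and, for each end $\ee$ of $S$, choose a proper edge-ray $\rho_\ee\colon [0,\infty)\to S$ starting at $v_0$ that represents $\ee$; since $S$ is a locally finite simplicial complex and is connected (after passing to the component carrying the ends, or noting that ends live in a single unbounded component), such edge-rays exist by a standard König-type argument. Given two ends $\ee, \ee'$, the formal sum $c_{\ee,\ee'} := \rho_\ee - \rho_{\ee'}$ (as an infinite sum of the oriented edges traversed, with the $\ee'$-ray reversed) is a locally finite $1$-chain: local finiteness holds because each of $\rho_\ee, \rho_{\ee'}$ is proper, so only finitely many edges of each ray meet any given compact set. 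Its boundary is $\partial c_{\ee,\ee'} = v_0 - v_0 = 0$ if we are careful — more precisely, all the ``telescoping'' interior vertices cancel and there is no contribution ``at infinity'' precisely because the chain is locally finite and the boundary map is computed simplex-by-simplex — so $c_{\ee,\ee'}$ is a locally finite $1$-cycle and defines a class $[c_{\ee,\ee'}]\in H\ind{1}{lf}(S;\Z)$.

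Next I would enumerate the ends as $\ee_1, \ee_2, \ee_3, \ldots$ (finite or countable list; note a locally finite complex has at most countably many ends) and consider the classes $z_j := [c_{\ee_1,\ee_{j+1}}]$ for $j = 1, \ldots, k-1$. The claim is that the $z_j$ are $\Z$-linearly independent, i.e.\ generate a subgroup isomorphic to $\oplus\ind{j=1}{k-1}\Z$. To see this I would introduce, for each end $\ee_i$, a locally finite $0$-cochain $\ff_i$ — equivalently a function on vertices — of the form ``indicator of the tail neighbourhood of $\ee_i$'': choose pairwise disjoint neighbourhoods $W_1, W_2, \ldots$ of the ends (possible since end neighbourhoods can be separated once we exhaust $S$ by compacta far enough out), and let $\ff_i$ be $1$ on the vertices in $W_i$ and $0$ elsewhere. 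The pairing of a locally finite $0$-cochain that is \emph{eventually locally constant along every proper ray} with a locally finite $1$-cycle is well-defined via $\langle \ff, c\rangle := \sum \ff(\partial) $ computed edge-by-edge — only finitely many edges of $c$ exit the region where $\ff$ is constant, so the sum is finite — and for a cycle it only depends on the homology class. One computes $\langle \ff_i, c_{\ee_a,\ee_b}\rangle = \dd_{ib} - \dd_{ia}$ (the cochain $\ff_i$ ``sees'' the ray $\rho_{\ee_b}$ eventually enter $W_i$ iff $b=i$). Hence the matrix pairing $(\ff_{i+1})_i$ against $(z_j)_j$ is the identity, which forces the $z_j$ to be linearly independent and the subgroup they generate to be free of rank $k-1$.

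The main obstacle I anticipate is making the boundary-at-infinity bookkeeping fully rigorous: one must verify (i) that $\partial c_{\ee,\ee'} = 0$ genuinely, with no phantom boundary contribution — this is where the locally finite structure is essential and where a clean choice of edge-rays (genuine simplicial paths, not arbitrary proper maps into $|S|$) pays off, and one may want to invoke a locally finite simplicial chain complex computing $H\ind{*}{lf}$, in the spirit of the locally finite $\Delta$-homology alluded to in Remark \ref{rmk: locally finite homology of [0,infty)}; and (ii) that the cochain pairing $\langle \ff_i, -\rangle$ descends to locally finite homology, i.e.\ that $\langle \ff_i, \partial(\text{locally finite } 2\text{-chain})\rangle = 0$ — which again follows because $\ff_i\circ\partial$ is eventually zero on any locally finite $2$-chain, each compact exhaustion piece contributing a finite telescoping sum, but requires a short argument. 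A secondary (routine) point is the existence of the disjoint end-neighbourhoods $W_i$ and of proper edge-rays; for this I would use that $S$ is locally finite and connected, exhaust $S\minus(\text{compact})$ and apply König's lemma to extract edge-rays, and shrink the $W_i$ inductively so they are pairwise disjoint. Once these technical points are settled, the linear independence via the identity pairing matrix is immediate, completing the proof of the lemma. In the case $k=\infty$ one obtains independent classes $z_1, z_2, \ldots$ and hence a subgroup isomorphic to $\oplus\ind{j=1}{\infty}\Z$, so $H\ind{1}{lf}(S;\Z)$ is not finitely generated.
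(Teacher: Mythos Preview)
Your construction of the cycles $c_{\ee,\ee'}$ matches the paper's (the paper uses consecutive differences $z_{j,j+1}=-r_j+r_{j+1}$ rather than differences from a fixed end, but this spans the same subgroup), and your argument for linear independence is correct once the technical points you flag are checked; the key observation you are implicitly using is that $\delta\ff_i$ is a \emph{compactly supported} $1$-cocycle (since $W_i$ is a component of the complement of a compact set), so the pairing $\langle\delta\ff_i,-\rangle\colon H\ind{1}{lf}(S;\Z)\to\Z$ is the standard duality between $H^1_c$ and $H\ind{1}{lf}$, and the adjoint identity $\langle\delta\ff_i,\partial D\rangle=\langle\delta^2\ff_i,D\rangle=0$ makes it descend to homology. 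The paper takes a different route for independence: it fixes a compact $K$ separating the chosen ends, sets up the locally finite Mayer--Vietoris sequence for $S=K'\cup(S\setminus K)$, and shows that the connecting map $\partial\colon H\ind{1}{lf}(S)\to H\ind{0}{lf}(\overline{S\setminus K}\cap K')$ sends $z_{j,j+1}$ to $r_{j+1}(n_{j+1})-r_j(n_j)$, which are visibly independent in the free abelian group on the components of $\overline{S\setminus K}\cap K'$. The two arguments are detecting the same information (signed intersection with the frontier of an end-neighbourhood), but packaged differently: the Mayer--Vietoris approach avoids setting up the cochain pairing and its well-definedness from scratch, at the cost of invoking the locally finite Mayer--Vietoris sequence and tracking the connecting homomorphism explicitly; your cochain approach is more self-contained and arguably more transparent, and generalises cleanly since it exhibits an explicit map to $\Z^{k-1}$. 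One small point to tidy: in the infinite-ends case you only need, for each finite relation $\sum_{j\le N}a_jz_j=0$, to separate the finitely many ends $\ee_1,\dots,\ee_{N+1}$, so there is no need to produce simultaneously disjoint neighbourhoods of all ends.
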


\begin{proof}
Let $r_1,r_2,\ldots$ be the ends of $S$.  By subdividing $r_j$ as $\sum_n r_j|\sub{[n,n+1]}$ each end can be viewed as a $1$-chain, which we again denote by $r_j$.  These chains are locally finite because the maps $r_j\colon [0,\infty)\lra $ are proper. Without loss of generality, we may assume that all rays $r_j$ emanate from the same point $x\in S$. Then $z\sub{j,j+1}:= -r_j + r\sub{j+1}$ are locally finite $1$-cycles for all $j=1,\ldots, k-1$. We claim that the $z_{j,j+1}$ generate a free Abelian subgroup of rank $k-1$ in $H\ind{1}{lf}(S)$.

Let $K\subset S$ be chosen such that $r_1,\ldots,r_k\in \calE(S)$ eventually map to distinct components of $S\setminus K$ and let $K'\supset K$ be a compact neighbourhood of $K$ that properly deformation retracts onto $K$ (this is possible since $S$ is a locally finite simplicial complex) and consider the locally finite Mayer-Vietoris sequence for $S= K'\cup (S\setminus K)$:
\[
 \ldots \lra H\ind{1}{lf}(S)\stackrel{\partial}{\lra} H\ind{0}{lf}(\overline{S\minus K}\cap K')\stackrel{\ff}{\lra} H\ind{0}{lf}(\overline{S\minus K})\oplus H\ind{0}{lf}(K')\lra \ldots 
\]
Then $\overline{S\minus K}\cap K'$ has $k$ compact components corresponding to the $r_1,\ldots, r_k$ and hence $H\ind{0}{lf}(\overline{S\minus K}\cap K')$ contains a free Abelian subgroup of rank $k$, which is, without loss of generality, generated by points of the from $r_j(n_j), {j=1, \ldots, k}$ and some $n_j\in \N$.  The first component of $\ff$ is the zero map since $r_j(n_j)=\partial \big(\sum\sub{n\geq n_i} r_j|\sub{[n,n+1]}\big)$ the second component of $\ff$ is the map $(m_1,\ldots,m_k)\mapsto m_1+\ldots + m_k$.  Hence $\im(\partial)=\ker(\ff)\supset \oplus\ind{j=1}{k-1}\Z\{r\sub{j+1}(n\sub{j+1})-r_j(n_j)\}$.

Recall that the boundary map of the Mayer-Vietoris sequence is defined by subdividing $1$-chains on $S$ into a sum of $1$-chains on $K'$ and on $S\minus K$ and mapping to the boundary of either summand.  Since $z\sub{j,j+1}$ can be decomposed as 
\[
\Big( -\sum\sub{n< n_j} r_j|\sub{[n,n+1]} + \sum\sub{n< n\sub{j+1}} r\sub{j+1}|\sub{[n,n+1]}\Big) 
+ \Big(-\sum\sub{n\geq n_j}r_j|\sub{[n,n+1]} + \sum\sub{n\geq n\sub{j+1}}r\sub{j+1}|\sub{[n,n+1]}\Big),
\]
where the first summand lies in $K'$ and the second in $S\setminus K$, we find that
\[
r\sub{j+1}(n\sub{j+1})-r_j(n_j)=\partial z\sub{j,j+1}.
\]
In particular, the $z_{j,j+1}$ map to the generators of $\oplus\ind{j=1}{k-1}\Z\{r\sub{j+1}(n\sub{j+1})-r_j(n_j)\}$ and thus generate a free Abelian subgroup of rank $k-1$ in $H\ind{1}{lf}(S)$.  Thus we have an increasing, and if $e(S)<\infty$ eventually stationary, sequence
\[
\Z\{ z_{1,2}\} \;\subset\; \Z\{z_{1,2}\}\dirsum \Z\{z_{2,3}\} \;\subset \ldots \subset\; \bigoplus\ind{j=1}{l-1}\Z\{z_{j,j+1}\}\;\subset\ldots\subset H\ind{1}{lf}(S;\Z).
\]
Hence $\oplus\ind{j=1}{e(S)-1}\Z\{z_{j,j+1}\}\subset H\ind{1}{lf}(S;\Z)$.
\end{proof}

The proof of Proposition \ref{prop: ends create degree 1 coarse homology} is now an easy consequence of the above facts.

\begin{proof}[Proof of Proposition \ref{prop: ends create degree 1 coarse homology}]
Recall that for a direct system of groups  \linebreak ${G_1\lra G_2\lra \ldots}$, elements $\al_1,\ldots, \al_n\in \varinjlim G_i$ generate a rank $n$ Abelian subgroup in $\varinjlim G_i$ if for all sufficiently large $i$, the representatives $a_j\in G_i$ of the $\al_j$ generate a rank $n$ Abelian subgroup in $G_i$. 

For simplicity and geometric clearness, we take $\{R_i(X)\}_i$ as coarsenings of $X$ and compute $HX_1(X)$ via $\varinjlim H\ind{1}{lf}(R_i(X))$. By Remark \ref{remark: remark on ends of coarsenings} the $R_i(X)$ all have $k=e(X)$ ends and Lemma \ref{lemma: rank k-1 abelian subgroup} then shows that the degree one locally finite homology groups $H\ind{1}{lf}(R_i(X)), i\geq 1$, each contain a rank $k-1$ free Abelian subgroup.  Moreover, we have seen that the coarsening maps $ R_i(X)\hookrightarrow R\sub{i+1}(X)$ map the ends of $R_i(X)$ to the ends of $R\sub{i+1}(X)$ and hence the generators of the rank $k-1$ free Abelian subgroup in $H\ind{1}{lf}(R_i(X))$ constructed in Lemma \ref{lemma: rank k-1 abelian subgroup} to the generators of the respective subgroup in $H\ind{1}{lf}(R\sub{i+1}(X))$.  Thus the equivalence classes $[z\sub{j,j+1}]\in \varinjlim H\ind{1}{lf}(R_i(X))=HX_1(X)$ generate a free Abelian subgroup of rank $k-1$.
\end{proof}

\section{Non-leaves with trivial coarse homology}\label{section: non-leaves with trivial coarse homology}

In this section we prove Theorem \ref{thm: non-leaves with unchanged coarse homology}.  By the results of Schweitzer presented in Section \ref{section: Schweitzers bounded homology property} of this article, every manifold that is diffeomorphically quasi-isometric to a leaf of a codimension one $C^{2,0}$ foliation must satisfy the bounded homology property.  Thus Theorem \ref{thm: non-leaves with unchanged coarse homology} follows directly from the following lemma.

\begin{lemma}\label{lemma: deformation to non-bounded homology without changing coarse homology}
 On every non-compact manifold $(M,g)$ of bounded geometry, $\dim M\geq 3$, there exists a metric $g'$ not satisfying the bounded homology property such that $HX_*(M,g')\simeq HX_*(M,g)$.  Moreover, we can choose $g'$ such that it has the same growth type as $g$.
\end{lemma}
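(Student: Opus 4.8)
The plan is to rerun Schweitzer's balloon construction, but with the spherical balloons replaced by ``bulbs'' that are invisible to coarse homology. The reason $g_S$ fails to preserve coarse homology (Proposition \ref{prop: g_SJ has infinitely generated coarse homology}) is that, up to quasi-isometry, Schweitzer's balloon $S^n(r_i)\minus B\sub{\nicefrac{d}{2}}(S)$ is an entire round sphere wedged onto a ray, and a round sphere carries an essential fundamental class which survives every coarsening. To kill this I would attach, at the points $x_i=\ga(d_i)$ of a proper ray $\ga$ in $(M,g)$ (chosen as in Schweitzer's construction), a region $C_i$ built from a \emph{single} flat Euclidean ball $D^n(r_i)$ whose boundary sphere $S^{n-1}(r_i)$ is funneled down to a sphere of radius $\nicefrac{d}{2}$: concretely, glue onto $\partial D^n(r_i)$ a rotationally symmetric metric $dt^2+\ff(t)^2 g\sub{S^{n-1}}$ on $S^{n-1}\times[0,T_i]$ with $\ff(t)=r_ie^{-t}$, so that $T_i=\log(2r_i/d)$ and $\ff(T_i)=\nicefrac{d}{2}$, and interpolate smoothly near $t=0$ with the flat ball and near $t=T_i$ with a collar of $\partial B\sub{\nicefrac{d}{2}}(x_i)$ in $(M,g)$. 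Replacing $B\sub{\nicefrac{d}{2}}(x_i)$ by $C_i$ leaves $M$ unchanged up to diffeomorphism; write $M'$ for the result carrying the new metric $g'$. The curvature and injectivity radius of the funnel are uniformly controlled (this is what forces $T_i\sim\log r_i$ rather than $O(1)$), so letting $r_i\to\infty$ slowly and $d_i$ grow sparsely --- as Schweitzer does --- yields a bounded geometry metric, and since each bulb only adds volume $\sim r_i^n$ the same parameter choices keep the growth type of $g'$ equal to that of $g$.

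Next I would verify that $g'$ violates the bounded homology property, so that $(M,g')$ is not a leaf by Theorem \ref{thm: Schweitzer leaves satisfy bounded homology}. Each $C_i$ is a compact, connected, simply connected $n$-disk with smooth boundary $\partial C_i\cong S^{n-1}$; as $\dim M\geq 3$ both $C_i$ and $\partial C_i$ are simply connected, $\betavol(\partial C_i)$ is the ($i$-independent) $\bb$-volume of a fixed small round sphere, and $\partial C_i$ plainly has a tubular neighbourhood containing $B\sub{\bb}(\partial C_i)$ --- so the $C_i$ satisfy conditions i)--iii) of Definition \ref{def: bounded homology property}. To see $\mvolbeta(C_i)\to\infty$, take any Morse function $f\colon C_i\to[0,\infty)$ with $f|\sub{\partial C_i}\equiv 0$: the volume of $f^{-1}([t,\infty))$ decreases continuously from $\vol(C_i)\sim r_i^n$ to $0$, so for some regular value $t^*>0$ the closed hypersurface $f^{-1}(t^*)$ bounds a region of volume $\tfrac12\vol(C_i)$; since the funnel contributes only $O(r_i^{n-1})$ of volume, this region (or its complement) occupies a fixed proportion of the flat ball $D^n(r_i)$, and the classical Euclidean isoperimetric inequality then gives $\vol\sub{n-1}(f^{-1}(t^*))\geq c\,r_i^{n-1}$, whence $\betavol(f^{-1}(t^*))\geq c'(r_i/\bb)^{n-1}\to\infty$. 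This is exactly Schweitzer's argument, with the isoperimetric inequality in a Euclidean ball replacing the one on a round sphere.

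It remains to show $HX_*(M')\simeq HX_*(M,g)$. Decompose $M'=A\cup B$, where $A$ is the part coming from $M\minus\bigcup_i B\sub{\nicefrac{d}{2}}(x_i)$, quasi-isometric to $(M,g)$ (the removed balls are pairwise more than $d$ apart), so that $HX_*(A)\simeq HX_*(M,g)$, and $B$ is a bounded neighbourhood of $\ga$ together with all the bulbs $C_i$. One checks that this decomposition is coarsely excisive with $A\cap B$ quasi-isometric to $[0,\infty)$, so the coarse Mayer--Vietoris sequence (Proposition \ref{prop: coarse Mayer-Vietoris Mitchener}) together with $HX_*([0,\infty))\simeq H\ind{*}{lf}([0,\infty))=0$ (Proposition \ref{prop: coarsening map is isomorphism} and Remark \ref{rmk: locally finite homology of [0,infty)}) reduces the claim to $HX_*(B)=0$. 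Here the bulb design pays off: $B$ is a uniformly contractible bounded geometry complex. Indeed, the bulbs are assembled from a flat ball and a funnel of uniformly bounded geometry, and one verifies that inside such a bulb a metric $\rho$-ball contracts inside a metric ball of radius linear in $\rho$ with constants independent of $i$, so that the ray-with-bulbs $B$ is uniformly contractible via a single contractibility function. Then Proposition \ref{prop: coarsening map is isomorphism} gives $HX_*(B)\simeq H\ind{*}{lf}(B)$, and $B$ admits a proper deformation retraction onto $\ga$ (retract each compact bulb onto its attaching point), so $H\ind{*}{lf}(B)=H\ind{*}{lf}([0,\infty))=0$. Alternatively one can avoid the uniform contractibility estimate by observing that the coarsening complex $R_k(C_i)$ of each bulb is contractible at every scale $k$ --- immediate for the flat ball and provable for the funnel --- so that $R_k(B)$ deformation retracts onto $R_k([0,\infty))$ and $HX_*(B)=\varinjlim_k H\ind{*}{lf}(R_k(B))=0$. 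Either way, $HX_*(M')\simeq HX_*(A)\simeq HX_*(M,g)$.

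The main obstacle is the three-way tension, already latent in Schweitzer's construction, between keeping $\betavol(\partial C_i)$ bounded, making $C_i$ fat enough that $\mvolbeta(C_i)\to\infty$, and arranging that the $C_i$ contribute nothing to $HX_*$: the most natural fat region with a tiny boundary is a nearly-closed round sphere --- Schweitzer's balloon --- which carries an essential top-degree coarse homology class. The way around this is to use only one fat piece and shrink its boundary by a long thin cylindrical funnel rather than by gluing on a second ball, so that $C_i$ stays a coarsely trivial disk; the delicate technical point is that the funnel must be realisable with bounded geometry (which is precisely what forces its length to grow like $\log r_i$) while the whole family of bulbs remains uniformly contractible, and making this uniform contractibility precise --- equivalently, controlling the homotopy type of the coarsening complexes of the bulbs at every scale --- is where the real work of the proof lies.
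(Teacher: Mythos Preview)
Your construction has a genuine gap: the bulb-space $B$ is \emph{not} uniformly contractible, and in fact $HX_n(B)\neq 0$, so the argument collapses at exactly the point you flag as ``where the real work lies''.

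Here is the problem. Take $p$ at the narrow end of the funnel of $C_i$ and fix any $\rho>1$. Every point at height $h$ along the funnel is within distance $h+\pi d/2$ of $p$ (go around the narrow sphere, then straight up), so $B_\rho(p)$ contains the full funnel segment $S^{n-1}\times[0,\rho-1]$ and is therefore homotopy equivalent to $S^{n-1}$. To kill this $S^{n-1}$ you must reach the centre of the flat cap $D^n(r_i)$, which lies at distance $T_i+r_i\sim r_i$ from $p$; for any $R<T_i+r_i$ the ball $B_R(p)$ is still an annulus and the inclusion $B_\rho(p)\hookrightarrow B_R(p)$ is a homotopy equivalence. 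Since $r_i\to\infty$, no uniform $R(\rho)$ exists.

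Your alternative via Rips complexes fails for the same reason. At scale $k>\pi d/2$ the narrow boundary sphere $\partial C_i\cong S^{n-1}(d/2)$ spans a simplex in $R_k$, so the funnel-in-$R_k$ becomes a cone on its wide end, i.e.\ a disk $D^n$ whose boundary is the wide $S^{n-1}(r_i)$. Gluing this to the contractible $R_k(D^n(r_i))$ along that $S^{n-1}$ yields $D^n\cup_{S^{n-1}}D^n\simeq S^n$, not a disk. (Two contractible pieces glued along $S^{n-1}$ give $S^n$; contractibility of the pieces separately is not enough.) Thus for each $k$ and each $i$ with $r_i\gg k$ the coarsening $R_k(B)$ carries an essential $n$-cycle on $C_i$, and exactly as for the balloon space one gets $HX_n(B)\cong\big(\prod_i\Z\big)/\big(\bigoplus_i\Z\big)$. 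In short, collapsing a small boundary sphere through a funnel is, coarsely, the same as capping it with a second ball --- your bulb \emph{is} Schweitzer's balloon from the coarse viewpoint.

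The paper resolves this tension by abandoning ``fat'' pieces altogether: it attaches thickened perfect binary trees $\mathfrak{T}_k$ instead of bulbs. Trees are uniformly contractible in the strongest possible sense (every metric ball is itself a tree), so the tree-space has $HX_*\cong H^{lf}_*([0,\infty))=0$ via Proposition~\ref{prop: coarsening map is isomorphism} and a proper collapse. The cost is that $\mvolbeta(\mathfrak{T}_k)\to\infty$ can no longer come from an isoperimetric inequality; instead one proves combinatorially, by induction on the height, that any continuous function on $\mathfrak{T}_k$ has a level meeting at least $\lceil k/2\rceil$ building blocks. The moral is that to get large Morse-$\beta$-volume without creating coarse homology one must use \emph{branching}, not \emph{fatness}.
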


The proof of Lemma \ref{lemma: deformation to non-bounded homology without changing coarse homology} is carried out in three steps. We first show how to construct the metric $g'$, then show that it doesn't satisfy the bounded homology property and finally prove that its coarse homology is isomorphic to that of the original metric.

\textbf{Construction of the metric $g'$:}  The construction of $g'$ is analogous to that of the non-leaf metric constructed by Schweitzer in \cite{Schweitzer: Riemannian manifolds not quasi isometric to leaves in codimension one foliations}, Section 4. Instead of balloons we use a kind of tree-manifolds $\frakT_k$ constructed below.

Recall that a \textit{rooted tree} is a tree with a distinguished vertex, which we call the \textit{root}.  The \textit{leaves} of a connected tree are the vertices of degree $1$, while we do not want to consider the root as a leaf.  The \textit{height} of a leaf is its distance from the root, where we let each edge have length equal to $1$.  By the \textit{perfect binary tree of height $k$} we mean the rooted tree, where the root and the leaves have degree $1$, while every other vertex has exactly $2$ children (i.e.\ has degree $3$) and every leaf has height $k$ (see Figure \ref{fig: perfect binary trees}).  (Note that commonly the root in binary trees is also required to have $2$ children.)  Denote the perfect binary tree of height $k$ by $\calT_k$.

\begin{figure}
 \centering
 \includegraphics[width=12cm]{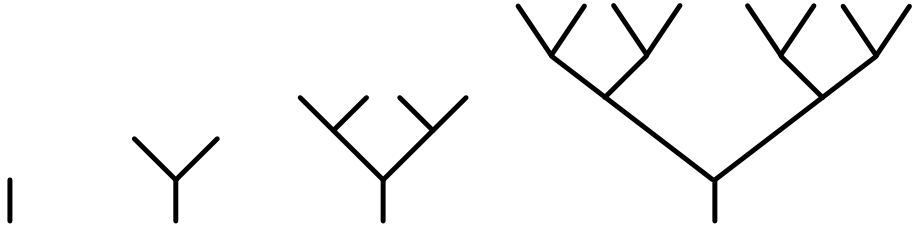}
\caption{$\calT_1, \ldots, \calT_4$ (edge lengths are not true to scale).}
\label{fig: perfect binary trees}
\end{figure}

By gluing cylinders $S^{n-1}\times [-1,1]$ to the edges and connecting them through spheres with three punctures at the vertices and finally adding disks to the leaves, we thicken the trees $\calT_k$ to get Riemannian manifolds of bounded geometry, which we denote by $\frakT_k$ (see Figure \ref{fig: the building block T_3 with dooted T_3}).  For technical reasons, we rescale the edge emanating from the root to have length $2k$ and glue a cylinder $S^{n-1}\times [-k,k]$ to it. Then $\frakT_k$ is quasi-isometric to $\calT_k$ with a rescaled edge, but topologically $\frakT_k$ is just a ball.

The tree-manifolds $\frakT_k$ are glued to $M$ as in Section \ref{section: Schweitzers bounded homology property}. We use the same notation. Let $\ga$ be a ray in $(M,g)$ and set $x_k=\ga(d_k), i=0,1,\ldots$ such that ${d(x\sub{k-1},x_k)>3d}$, where $\nicefrac{1}{2}>d>0$ is a lower bound for the injectivity radius of $M$. Then the distance balls $B_d(x_k)$ are also topological balls.  We replace the original metric $g$ on $B_d(x_k)$ by the metric induced from $\frakT_k$ (after possibly interpolating the metric on $S^{n-1}\times [-k, 0]$ between the diameter of $\partial B_d(x_k)$ and the diameter of the sphere factor of $S^{n-1}\times [-k,k]$).   By the same arguments as in \cite{Schweitzer: Riemannian manifolds not quasi isometric to leaves in codimension one foliations}  this can be performed in such a way, that the new resulting metric $g'$ on $M$ is smooth and again of bounded geometry. By placing the $\frakT_k$ sufficiently far apart, we can make sure that $g$ and $g'$ have the same growth type.

\begin{figure}
\labellist
\pinlabel $D^n$ at -12 300
\pinlabel $S^{n-1}\cart [-1,1]$ at 60 175
\pinlabel $S^{n-1}\times [-k,k]$ at 105 60
\endlabellist
\centering
    \includegraphics[width=10cm]{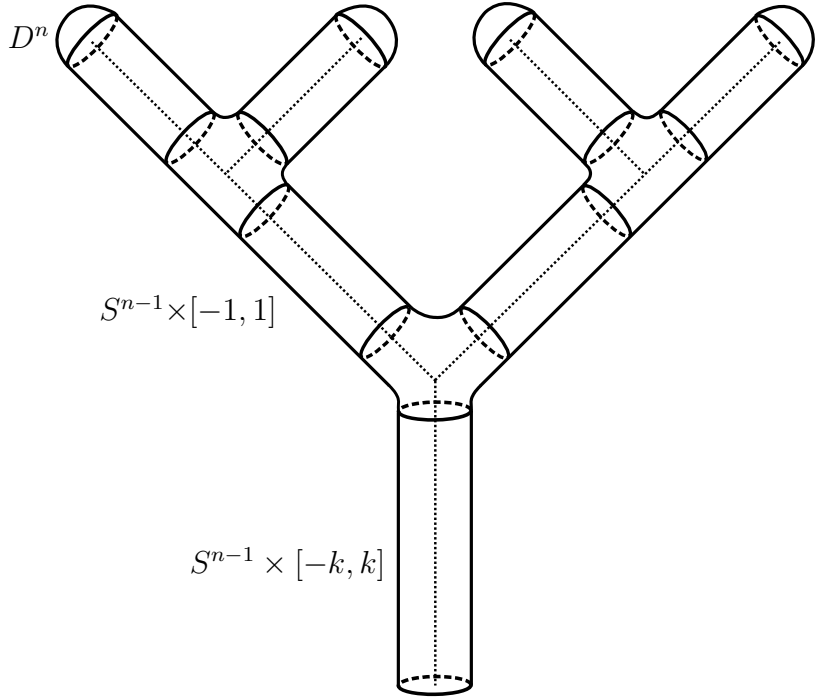}
  \caption{The manifold $\mathfrak{T}_3$ with dotted tree $\mathcal{T}_3$.}
\label{fig: the building block T_3 with dooted T_3}
\end{figure} 

\noindent\textbf{Proof that $(M,g')$ does not satisfy the bounded homology property:} In what follows, denote by $\mathfrak{T}_k'$ the manifold $\mathfrak{T}_k$ without the lower part ${S^{n-1}\times [-k,0)}$ of the cylinder starting from the root of $\mathcal{T}_k$.  Thus $\partial \mathfrak{T}_k'=S^{n-1}\times \{0\}$.  It suffices to show that the Morse-$\bb$-volume of the $\mathfrak{T}_k'$ is unbounded for every $\bb>0$ as $k$ goes to infinity.  For then, the $\mathfrak{T}_k'$ form a sequence of closed codimension $0$ submanifolds with $\betavol(\partial\mathfrak{T}_k')=\betavol(S^{n-1})$, while there exists no constant $L>0$ such that $\mvolbeta(\mathfrak{T}_k')\leq L$.  It is easy to verify that the $\mathfrak{T}_k'$ satisfy the conditions i)-iii) in Definition \ref{def: bounded homology property}:  We have already seen that $\betavol(\partial \mathfrak{T}_k')$ is constant, and as $n\geq 3$ both  $\mathfrak{T}_k'\approx D^n$ and $\partial\mathfrak{T}_k'\approx S^{n-1}$ are connected and simply connected,
 thus conditions i) and ii) are fulfilled.  To see that the boundary of $\mathfrak{T}_k'$ has a large tubular neighbourhood, i.e. satisfies condition iii), we use that the length of the cylinder at the root of $\mathcal{T}_k$ has length $2k$.  Thus for every $k> \bb$, this cylinder $S^{n-1}\times [-k,k]$ is a tubular neighbourhood of $\partial \mathfrak{T}_k'$ that contains the $\bb$-neighbourhood $B\sub{\bb}(\partial \mathfrak{T}_k)$.

To prove that $\mvolbeta(\mathfrak{T}_k')$ goes to infinity, we show that for every continuous (and in particular, for every Morse function) $f\colon \mathfrak{T}_k'\lra \R$ there exists an $x\in \R$ such that $f$ takes the value $x$ on at least $\lceil \frac{k}{2}\rceil$ cylinders ${S^{n-1}\times [-1,1]},{S^{n-1}\times [0,k]}$, disks or punctured spheres in $\frakT_k$.  We will henceforth refer to these four types of pieces as \textit{building blocks} of $\frakT_k$. This statement is sufficient to prove that $\mvolbeta(\mathfrak{T}_k')$ goes to infinity because every ball of radius $\bb$ in $\frakT_k$ contains at most a bounded number of building blocks, say $c\sub{\bb}$, and for $x$ as above
\[
\betavol(f^{-1}(x))\geq \frac{\lceil \frac{k}{2}\rceil}{c\sub{\bb}}
\]
holds and hence 
\[
\mvolbeta(\mathfrak{T}_k')\geq \frac{\lceil \frac{k}{2}\rceil}{c\sub{\bb}}.
\]
Note that we do neither require our functions to vanish on $\partial \mathfrak{T}_k'$.  This will enable us to do induction to the trees of lower height lying inside of $\mathcal{T}_k$.  We define $u(k)$ to be the smallest natural number such that for any continuous function $f\colon \frakT_k\lra \R$ there exists some $x_f\in \R$ such that $f$ takes the value $x_f$ on at least $u(k)$ building blocks and make the following claim:   

\begin{lemma}
 $u(k+2)>u(k)$.
\end{lemma}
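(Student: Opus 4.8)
The plan is to exploit the self-similarity of the tree-manifolds: $\frakT_{k+2}$ contains four pairwise disjoint submanifolds, each a copy of $\frakT_k$, glued to a connected ``trunk'' that carries at least one extra building block. Concretely, I would let $v_1$ be the first branching vertex of $\calT_{k+2}$ (at height $1$), $w_1,w_2$ its two children (height $2$), and $a_1,\dots,a_4$ the four vertices at height $3$ ($a_1,a_2$ children of $w_1$, and $a_3,a_4$ children of $w_2$). For each $i$, let $\mathfrak{N}_i\subset\frakT_{k+2}$ be the thickening of the subtree formed by the edge from the parent of $a_i$ to $a_i$ together with the branch below $a_i$, with that edge cylinder playing the role of the root cylinder. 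Since the leaves below $a_i$ sit at distance $k$ from the parent of $a_i$, one checks that $\mathfrak{N}_i$ is homeomorphic to $\frakT_k$ by a homeomorphism carrying building blocks to building blocks. The $\mathfrak{N}_i$ are pairwise disjoint, and the complement $Z:=\frakT_{k+2}\setminus\bigcup_i\mathring{\mathfrak{N}_i}$ --- namely the root cylinder, the punctured spheres at $v_1,w_1,w_2$, and the two edge cylinders at $v_1$ --- is connected, meets each $\mathfrak{N}_i$ exactly in its boundary sphere $\partial\mathfrak{N}_i$, contains building blocks belonging to no $\mathfrak{N}_i$ (for instance the punctured sphere at $v_1$), and satisfies $\frakT_{k+2}=Z\cup\mathfrak{N}_1\cup\dots\cup\mathfrak{N}_4$.

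Then, given a continuous $f\colon\frakT_{k+2}\to\R$, I would apply the definition of $u(k)$ to each restriction $f|_{\mathfrak{N}_i}$ (legitimate because $\mathfrak{N}_i\cong\frakT_k$, and crucially no condition on $f|_{\partial\mathfrak{N}_i}$ is required) to obtain a value $x_i$ attained on at least $u(k)$ building blocks of $\mathfrak{N}_i$, hence on at least $u(k)$ building blocks of $\frakT_{k+2}$. Writing $I_i:=f(\mathfrak{N}_i)$ and $I_Z:=f(Z)$ --- compact intervals, with $I_i\cap I_Z\neq\emptyset$ since $\emptyset\neq\partial\mathfrak{N}_i\subseteq\mathfrak{N}_i\cap Z$ --- I would then argue that some value of $f$ is attained on at least $u(k)+1$ building blocks, which gives $u(k+2)\geq u(k)+1>u(k)$. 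If $x_i\in I_Z$ for some $i$, then $x_i\in f(\overline{Q})$ for some building block $Q$ of $Z$, and $Q$ is distinct from the $\geq u(k)$ blocks of $\mathfrak{N}_i$ on which $x_i$ occurs (those lie in $\mathfrak{N}_i$, while $Q$ does not), giving $\geq u(k)+1$ blocks; similarly, if $x_i\in I_j$ for some $i\neq j$, then $x_i$ occurs on $\geq u(k)$ blocks of $\mathfrak{N}_i$ plus at least one block of the disjoint submanifold $\mathfrak{N}_j$.

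The hard part --- and the reason the statement jumps by two rather than one --- is ruling out the remaining case, where $x_i\notin I_Z$ for every $i$ and $x_i\notin I_j$ for all $i\neq j$; I expect getting this step exactly right to be the crux. Here I would observe that since $I_i$ meets $I_Z$ but $x_i\notin I_Z$, each $x_i$ lies strictly on one side of the interval $I_Z$; with four indices, the pigeonhole principle forces two of the values, say $x_p\leq x_q$, onto the same side, say both above $I_Z$. Since $I_p$ is an interval meeting $I_Z$ and containing $x_p>\sup I_Z$, it contains $[\sup I_Z,x_p]$, and likewise $I_q\supseteq[\sup I_Z,x_q]$; as $\sup I_Z<x_p\leq x_q$, this forces $x_p\in I_q$ with $p\neq q$, contradicting the case assumption. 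Hence one of the favourable cases always occurs, and $u(k+2)\geq u(k)+1>u(k)$. With only two copies of $\frakT_k$ (a jump of one) the two special values could fall on opposite sides of $I_Z$ and this interval/pigeonhole step would collapse, so the argument genuinely needs $\frakT_{k+2}$.
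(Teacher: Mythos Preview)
Your argument is correct and rests on the same structural observation as the paper --- four disjoint copies of $\frakT_k$ inside $\frakT_{k+2}$ attached to a connected trunk --- but the endgame is genuinely different. The paper does not collect all four special values and run your interval/pigeonhole case analysis. Instead it picks points $t_{\min},t_{\max}\in\frakT_{k+2}'$ where $f$ attains its minimum and maximum; the unique tree-geodesic between them can meet at most two of the four copies, so some copy $\frakT_k^{'(s)}$ is avoided entirely. Applying the definition of $u(k)$ to $f|_{\frakT_k^{'(s)}}$ gives a value $x$ hit on at least $u(k)$ blocks there, and since $f(t_{\min})\leq x\leq f(t_{\max})$, the intermediate value theorem forces $x$ to appear somewhere on that geodesic --- which lies outside $\frakT_k^{'(s)}$ --- supplying the extra block in one stroke.

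The paper's route is shorter and more geometric (it uses the tree shape directly via the geodesic), while yours is more combinatorial and uses only that the pieces are connected, so that their images are intervals. Both arguments genuinely require more than two copies, and for the same underlying reason: in the paper's version the geodesic could hit both copies; in yours the two special values could land on opposite sides of $I_Z$ --- exactly the obstruction you point out in your final paragraph.
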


\begin{proof}
\begin{figure}
\labellist
\pinlabel $\mathfrak{T}\ind{k}{(1)}$ at 13 213
\pinlabel $\mathfrak{T}\ind{k}{(2)}$ at 113 213
\pinlabel $\mathfrak{T}\ind{k}{(3)}$ at 205 213
\pinlabel $\mathfrak{T}\ind{k}{(4)}$ at 305 213

\endlabellist
\centering
\includegraphics[width=12cm]{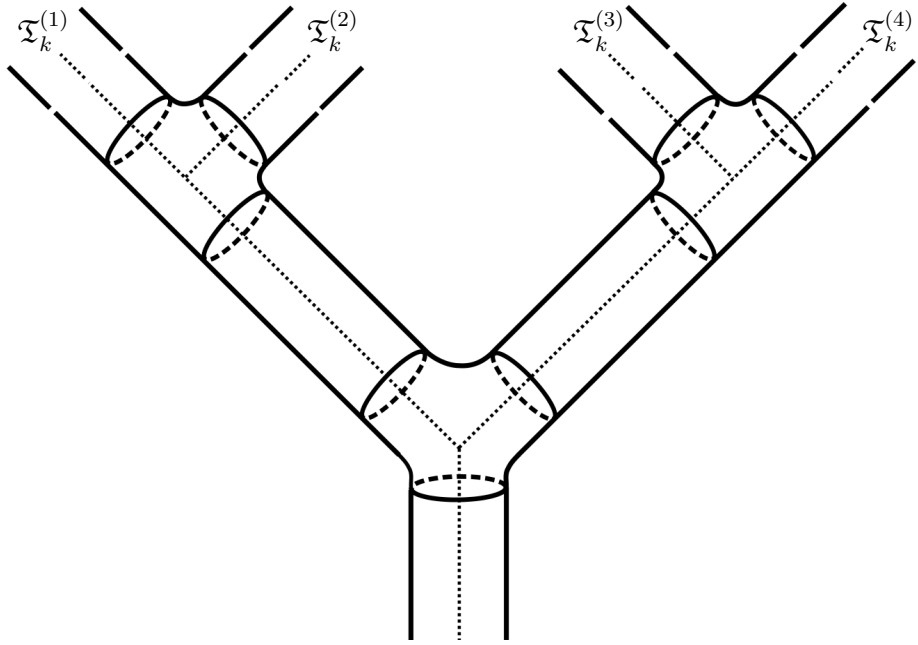}
\caption{$\mathfrak{T}\sub{k+2}$ with four copies of $\mathfrak{T}\sub{k}$.}
\label{fig: k+2-baum mit 4 mal k-baum}
\end{figure}

Let $f\colon \mathfrak{T}\sub{k+2}'\lra \R$.  Note that $\mathfrak{T}\sub{k+2}'$ contains four copies $\mathfrak{T}\ind{k}{'(1)}, \ldots,\mathfrak{T}\ind{k}{'(4)}$ of $\mathfrak{T}_k'$ (up to scaling the length of the edge leading to the root) as shown in Figure \ref{fig: k+2-baum mit 4 mal k-baum}.  Let $t\sub{\max}\in \mathfrak{T'}\sub{k+2}$ be such that $f(t_{\max})=\max f$ and $t\sub{\min}$ analogously.  Then there exists a $\mathfrak{T}\ind{k}{'(s)}$, say $\mathfrak{T}\ind{k}{'(1)}$, such that the geodesic between $x\sub{\min}$ and $x\sub{\max}$ does not intersect $\mathfrak{T}\ind{k}{'(1)}$.  But for $f\mid\sub{\mathfrak{T}\ind{k}{'(1)}}\colon \mathfrak{T}\ind{k}{'(1)}\lra \R$ there exists some $x\in \R$ and $u(k)$ building blocks in $\mathfrak{T}\ind{k}{'(1)}$ such that $f\mid\sub{\mathfrak{T}\ind{k}{'(1)}}$ takes the value $x$ on these blocks.  But by construction $f(t\sub{\min})\leq x \leq f(t\sub{\max})$ and since the geodesic between $t\sub{\min}$ and $t\sub{\max}$ does 
not intersect $\mathfrak{T}\ind{k}{'(1)}$, there exists an additional edge in $\mathfrak{T}\sub{k+2}'\setminus \mathfrak{T}\ind{k}{'(1)}$ such that $f$ takes the value $x$ on this edge.  Hence $u(k+2)\geq u(k)+1$.
\end{proof}

\noindent\textbf{Proof that $HX_*(M,g')\simeq HX_*(M,g)$:} Note that $(M,g')$ is coarsely quasi-isometric to $(M,g)$ with the trees $\mathcal{T}_k$ attached to the $x_k$.  Denote this metric space by $\mathfrak{M}$.  Then $HX_*(M,g')\simeq HX_*(\mathfrak{M})$.  But $\mathfrak{M}$ has a coarsely excisive decomposition into $(M,g)$ and the ray $\ga$ with the trees $\calT_k$ attached to ${x_k:=\ga(d_k)}$, which in turn is isometric to 
\[
 \mathcal{T}':=[0,\infty)\bigcup\sub{d_k} \left( \cup_k \mathcal{T}_k\right),
\]
where $\calT'$ carries the path metric induced from the $\calT_k$ and $[0,\infty)$. Since ${(M,g)\cap \mathcal{T}'}$ is isometric to $[0,\infty)$, Proposition \ref{prop: coarsening map is isomorphism} yields a coarse Mayer-Vietoris sequence
\begin{align*}
 \ldots &\lra  HX_k([0,\infty))\lra HX_k(M,g)\oplus HX_k(\mathcal{T}')\lra HX_k(\mathfrak{M})\lra \\ 
 &\hspace{10mm}\lra HX_{k-1}([0,\infty))\lra \ldots
\end{align*}
Since $HX_k([0,\infty))=\{0\}$ (see the proof of Proposition \ref{prop: g_SJ has infinitely generated coarse homology}), we have isomorphisms $HX_k(\mathfrak{M})\simeq HX_k(M,g)\oplus HX_k(\mathcal{T}')$. It thus remains to show that $HX_k(\mathcal{T}')$ vanishes.

$\mathcal{T}'$ naturally has the structure of a metric simplicial complex and this space has bounded coarse geometry in the sense of Definition \ref{def: bounded coarse geometry metric space}: Any ball of radius $r$ in $\mathcal{T}'$ intersects at most $2^{\lceil r\rceil+2}$-many edges.  Since every subset of diameter $1$ intersects at least one edge and since every edge is intersected by at most two disjoint sets of diameter $1$, the $1$-capacity of every ball of radius $r$ in $\mathcal{T}'$ is bounded for any given $r$.  Hence $\mathcal{T}'$ is a bounded geometry complex.  Moreover every ball in $\mathcal{T}'$ is contractible within itself, consequently $\mathcal{T}'$ is in particular uniformly contractible.   Thus we can apply Proposition \ref{prop: coarsening map is isomorphism} to find that $HX_*(\mathcal{T}')\simeq H\ind{*}{lf}(\mathcal{T}')$.  Though collapsing each $\mathcal{T}_k$ to its respective root is \textit{not} a quasi-isometry, it is a proper homotopy equivalence and thus $H\ind{*}{lf}(\mathcal{T}')\simeq H\ind{*}{lf}([0,\infty))=\{0\}$.

Summarizing, we have the following sequence of isomorphisms:
\[
 HX_k(M,g')\simeq HX_k(\mathfrak{M}) \simeq HX_k(M,g)\oplus HX_k(\calT') \simeq HX_k(M,g).
\]
This finishes the proof of Theorem \ref{thm: non-leaves with unchanged coarse homology}. \qed

\end{document}